\newtheorem{thm}{Theorem}[section]
\newtheorem{prop}[thm]{Proposition}
\newtheorem{lem}[thm]{Lemma}
\theoremstyle{definition}
\newtheorem{dfn}[thm]{Definition}
\theoremstyle{remark}
\newtheorem{rmk}[thm]{Remark}
\numberwithin{equation}{section}
\renewenvironment{proof}[1][\proofname]{\begin{trivlist}\item[\hskip \labelsep \itshape \bfseries #1{}\hspace{2ex}]}
{\qed\end{trivlist}}
\begin{document}
\title[Invariant Theory of Unipotent Group Actions.]{On the Invariant Theory of Unipotent Group Actions from a Geometric Perspective.}
\author{Stephen Maguire}
\email{maguire2@illinois.edu}

\begin{abstract}
This paper is a sequel to an earlier paper by the author.  In that paper, the author formulated the notion of a $ c(t) $-pair for the action of \linebreak $ \mathbb{G}_{a} \cong \operatorname{Spec}(k[t]) $ on a variety $ \operatorname{Spec}(A) $.  Namely a $ c(t) $-pair is a pair of elements $ g,h \in A $ such that $ g(t_{0} \ast x) = g(x)+c(t_{0})h(x) $ for $ (t_{0},x) \in \mathbb{G}_{a} \times \operatorname{Spec}(A) $.  A quasi-principle pair is a $ b(t) $-pair $ (g,h) $ such that $ \mathbf{ker}(b(t)) $ acts trivially on $ \operatorname{Spec}(A) $.  The existence of a quasi-principle pair is intimately related to whether there is an open sub-variety $ D(h) \subseteq \operatorname{Spec}(A) $ such that $ D(h) $ is a trivial bundle over $ D(h)//\mathbb{G}_{a} $.  We defined the pedestal ideal to be the ideal of $ A $ generated by $ 0 $ and all $ h $ such that there exists a $ g \in A $ such that $ (g,h) $ is a quasi-principle pair.  The large pedestal ideal is the ideal of $ A $ generated by $ h $ such that there is a non-zero, additive polynomial $ c(t) $ and a $ g \in A $ such that $ (g,h) $ is a $ c(t) $-pair.  We described some pathological situations that could happen for $ \mathbb{G}_{a} $-actions on varieties over fields $ k $ of positive characteristic that could not happen over fields of characteristic zero.  We also generalized van den Essen's algorithm to $ \mathbb{G}_{a} $-actions on varieties over fields of positive characteristic.

We generalize the notion of a $ c(t) $-pair to the notion of an $ \alpha $-pair for connected, unipotent, linear algebraic groups $ G $.  Here $ \alpha $ is a surjective endomorphism of $ G $.  In this paper we explore the theory of pairs for unipotent groups and how they relate to local triviality conditions, i.e., we show that the existence of quasi-principle $ \alpha $-pairs relates to whether a neighborhood $ D(H) \subseteq \operatorname{Spec}(A) $ is a trivial $ G $-bundle over $ G \backslash \backslash D(H) $.  We also show that the existence of an arbitrary $ \alpha $-pair relates to whether there is an fppf neighborhood $ U \to D(H) $ which is a trivial $ G $-bundle and which satisfies certain conditions related to $ \alpha $.  This relates the less intuitive notion of an $ \alpha $-pair to local triviality in the fppf topology.  We then give a generalized version of van den Essen's algorithm to vector groups and unipotent groups.  This generalized version works in any characteristic.  In the process we discuss some of the pathologies that occur for actions of unipotent groups.

We also explore what stability for connected, unipotent, linear algebraic groups should look like by exploring the concept of affine stability.  Namely, a point $ x $ is affine stable if $ x \in \operatorname{Spec}(A) \setminus \mathcal{V}(\mathfrak{P}(A)) $.  For any such point, there is a neighborhood $ D(H) $ containing $ x $, such that after a suitable modification, $ D(H) $ is a trivial $ G $-bundle over $ G \backslash \backslash D(H) $. 
\end{abstract}
\maketitle
\section{Introduction.}
Mathematicians who work with connected, unipotent, linear algebraic groups would like to find a desirable notion of stability.  If we were to re-explore the theory of stability for a one dimensional torus, then the following propositions emerge:
\begin{prop}
    Let $ \operatorname{Spec}(A) $ be an affine $ \mathbb{G}_{m} $-variety over an algebraically closed field $ k $ with action $ \beta $.  The following are equivalent:
    \begin{itemize}
        \item[a)] the variety $ D(h) $ is a trivial $ \mathbb{G}_{m} $ bundle,
        \item[b)] there is a $ \mathbb{G}_{m} $-equivariant, generically smooth, dominant, morphism \linebreak $ \Psi: D(h) \to \mathbb{G}_{m} $ with connected fibres,
        \item[c)] there is a rational function $ g/h^{e} \in A_{h}^{\ast} $, which is a semi-invariant of weight one, i.e, $ (g/h^{e})(w \bullet x) = w \left((g/h^{e})(x)\right) $ for all $ w \in \mathbb{G}_{m} $ and $ x \in D(h) $.
    \end{itemize}
\end{prop}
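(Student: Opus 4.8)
The plan is to convert the geometry into the algebra of the $\mathbb{Z}$-grading induced by $\beta$ and to run the cycle $(c)\Rightarrow(a)\Rightarrow(b)\Rightarrow(c)$. Because $\mathbb{G}_m$ is linearly reductive in every characteristic, the action $\beta$ on $D(h)$ is the same datum as a $\mathbb{Z}$-grading $A_h=\bigoplus_{n\in\mathbb{Z}}(A_h)_n$, where $(A_h)_n$ is the space of semi-invariants of weight $n$ and $(A_h)_0=(A_h)^{\mathbb{G}_m}$ is the coordinate ring of the categorical quotient $B:=\operatorname{Spec}\!\big((A_h)^{\mathbb{G}_m}\big)$; this ring is finitely generated since $A$, hence $A_h$, is. Under this dictionary a weight-one semi-invariant is precisely a homogeneous element of degree one, so the whole proposition reduces to the graded-ring statement that such an $A_h$ is a trivial $\mathbb{G}_m$-bundle exactly when it possesses a homogeneous \emph{unit} of degree one.

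For $(c)\Rightarrow(a)$, the computational heart, I would begin with a unit $u=g/h^e\in A_h^\ast$ of weight one. Then $u^n$ is a unit of weight $n$ for every $n\in\mathbb{Z}$, so multiplication by $u^n$ is an $(A_h)_0$-module isomorphism $(A_h)_0\xrightarrow{\ \sim\ }(A_h)_n$. Summing over $n$ identifies $A_h=(A_h)_0[u,u^{-1}]$ as a graded ring, i.e. a Laurent polynomial ring over the invariants, whence $D(h)\cong B\times\mathbb{G}_m$ as $\mathbb{G}_m$-varieties. This is exactly the trivial $\mathbb{G}_m$-bundle over $B=D(h)/\!\!/\mathbb{G}_m$.

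For $(a)\Rightarrow(b)$ I take the second projection $\Psi=\operatorname{pr}_2\colon D(h)\cong B\times\mathbb{G}_m\to\mathbb{G}_m$. It is $\mathbb{G}_m$-equivariant and dominant by construction of the trivialization. As a base change of $B\to\operatorname{Spec}(k)$ it is smooth exactly over the smooth locus of $B$, which is dense since $B$ is a variety over the algebraically closed field $k$; hence $\Psi$ is generically smooth. Each fibre is isomorphic to $B$, and $B$ is connected because $\operatorname{Spec}(A)$, and therefore its principal open $D(h)$ and its quotient $B$, is irreducible; so the fibres are connected.

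Finally, $(b)\Rightarrow(c)$ is a matter of unwinding definitions: an equivariant dominant morphism $\Psi\colon D(h)\to\mathbb{G}_m$ corresponds to a ring map $k[s,s^{-1}]\to A_h$, and since $s$ is a unit its image $u:=\Psi^{\#}(s)$ is a unit of $A_h$, while equivariance with respect to the scaling action on the target forces $u$ to have weight one; writing $u=g/h^e$ yields $(c)$. I expect the one genuinely delicate point to be bookkeeping the convention by which $\mathbb{G}_m$ acts on the target copy of $\mathbb{G}_m$, so that ``equivariant'' matches ``semi-invariant of weight one'' on the nose — note in particular that the generic-smoothness and connected-fibre hypotheses play no role in this direction, which is consistent with their being the extra structure automatically produced by $(a)$. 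Pinning down that normalization is where any hidden friction will live, but there is no deep obstacle once the grading dictionary is set up.
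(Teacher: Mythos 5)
Your proposal is correct, but it takes a genuinely different route from the paper. The paper proves a)\,$\Leftrightarrow$\,b) geometrically, by taking $Y=\Psi^{-1}(1)$ with its reduced structure and exhibiting the explicit trivialization $x\mapsto(\Psi(x),\Psi(x)^{-1}\bullet x)$, and it proves c)\,$\Rightarrow$\,b) by invoking Lemma~\ref{L:connectedNonIdentityRed}: one resolves indeterminacies, applies Stein factorization, and rules out factorization through a non-identity endomorphism $q$ of $\mathbb{G}_{m}$ because that would force a semi-invariant of weight $1/q$. You bypass all of that heavy machinery by running the cycle c)\,$\Rightarrow$\,a)\,$\Rightarrow$\,b)\,$\Rightarrow$\,c) and doing the real work in c)\,$\Rightarrow$\,a) via the grading dictionary: a weight-one \emph{unit} $u$ gives $(A_h)_0$-module isomorphisms $(A_h)_0\xrightarrow{\;\cdot u^n\;}(A_h)_n$, and homogeneity of the components forces $u$ to be algebraically independent over $(A_h)_0$, so $A_h\cong(A_h)_0[u,u^{-1}]$ as graded rings --- a clean structural statement the paper never makes explicit, which also renders finite generation of $A_h^{\mathbb{G}_m}$ transparent. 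The trade-off is that your argument leans entirely on the equivalence between $\mathbb{G}_m$-actions and $\mathbb{Z}$-gradings (valid in all characteristics since $\mathbb{G}_m$ is diagonalizable), which has no analogue for the unipotent groups that are the paper's real target; the paper's more laborious geometric template (Stein factorization, sections, ruling out factorization through non-automorphisms) is precisely what it reuses in Lemma~\ref{L:connectedNonIdentityGeneral} and Proposition~\ref{P:principlePairUnip}. One small bookkeeping point in your a)\,$\Rightarrow$\,b): you call the map to $\mathbb{G}_m$ the second projection where the paper writes $p_1$, but the content is identical; and your observation that generic smoothness and connectedness of fibres are not needed for b)\,$\Rightarrow$\,c) matches the paper, which also only uses equivariance there.
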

\begin{prop}
    Let $ \operatorname{Spec}(A) $ be a $ \mathbb{G}_{m} $-variety with action $ \beta $.  The following are equivalent:
    \begin{itemize}
        \item[a)] if $ q \in \mathbb{N} $, then there is a morphism $ \Phi:D(h) \to (\mathbb{G}_{m})^{q} $ such that the following diagram commutes:
            \begin{equation}
            \xymatrix{
                \mathbb{G}_{m} \times D(h) \ar[d]^{(\operatorname{id}_{\mathbb{G}_{m}}, \Phi)} \ar[rr]^{\beta} & & D(h) \ar[d]^{\Phi} \\
                \mathbb{G}_{m} \times \left(\mathbb{G}_{m}\right)^{q} \ar[rr]^{\gamma_{q}} & & \left(\mathbb{G}_{m}\right)^{q}
                }.
            \end{equation}
            Here $ (\mathbb{G}_{m})^{q} $ is the $ \mathbb{G}_{m} $-variety whose space is $ \mathbb{G}_{m} $, but with action $ \gamma_{q} $ which sends a point $ (w,s) $ to $ w^{q} s $.
        \item[b)] there is an $ e \in \mathbb{N}_{0} $ and $ g \in A_{h}^{\ast} $ such that $ g/h^{e} $ is a semi-invariant of weight $ q $, i.e., 
            \begin{equation*}
                (g/h^{e})(w \bullet x) = w^{q} (g/h^{e})(x),
            \end{equation*}
            for all $ w \in \mathbb{G}_{m} $ and $ x \in D(h) $.
    \end{itemize}
\end{prop}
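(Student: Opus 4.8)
The plan is to prove this by unwinding the definitions through the standard dictionary between morphisms into $\mathbb{G}_m$ and invertible global functions. First I would recall that a morphism $\Phi \colon D(h) \to (\mathbb{G}_m)^q$ is, once we forget the action decorating the target, simply a morphism $D(h) \to \mathbb{G}_m = \operatorname{Spec}(k[s,s^{-1}])$. Giving such a morphism is the same as giving a $k$-algebra homomorphism $k[s,s^{-1}] \to A_h$, and such a homomorphism is determined by the image of the unit $s$; this yields a canonical bijection between $\operatorname{Mor}(D(h), \mathbb{G}_m)$ and the group of units $A_h^{\ast}$. Writing $f \in A_h^{\ast}$ for the unit corresponding to $\Phi$, and using that every element of the localization $A_h = A[1/h]$ has the shape $g/h^e$ for some $g \in A$ and $e \in \mathbb{N}_0$, I would fix such a presentation $f = g/h^e$.

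Next I would translate commutativity of the square into an identity on functions. Chasing $(w,x)$ across the top and down the right gives $\Phi(\beta(w,x)) = \Phi(w \bullet x)$, while going down the left and across the bottom gives $\gamma_q(w,\Phi(x)) = w^q \Phi(x)$. Hence the diagram commutes if and only if $f(w \bullet x) = w^q f(x)$ for all $w \in \mathbb{G}_m$ and $x \in D(h)$, which is exactly the assertion that the unit $g/h^e$ is a semi-invariant of weight $q$. This settles (a) $\Rightarrow$ (b): the unit extracted from $\Phi$ is the required semi-invariant. For (b) $\Rightarrow$ (a) I would run the computation backwards; a semi-invariant $g/h^e$ which is a unit in $A_h$ defines a morphism $\Phi \colon D(h) \to \mathbb{G}_m$, and its weight-$q$ semi-invariance is precisely the commutativity of the square against the action $\gamma_q$.

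The only genuinely delicate point, and the step I would treat with care, is the passage between the pointwise formula $f(w \bullet x) = w^q f(x)$ and its scheme-theoretic counterpart, namely that the coaction $\beta^{\#} \colon A_h \to k[w,w^{-1}] \otimes A_h$ sends $f$ to $w^q \otimes f$. Over an algebraically closed field the two are interchangeable, since $k$-points are dense and separate regular functions, so the pointwise identity suffices; to make the argument robust as an equality of morphisms rather than a condition on closed points, I would phrase equivariance through the comodule structure and first check that $\beta$ restricts to $D(h)$ (equivalently, that $D(h)$ is $\mathbb{G}_m$-stable, which is what makes the top arrow of the diagram well defined) so that $\beta^{\#}$ indeed descends to the localization. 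With that bookkeeping in place, the weight grading $A_h = \bigoplus_n (A_h)_n$ identifies the semi-invariants of weight $q$ with the homogeneous component $(A_h)_q$, and the equivalence reduces to the clean observation that $f$ corresponds to an equivariant $\Phi$ exactly when $f$ is a homogeneous unit of degree $q$.
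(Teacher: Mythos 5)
Your argument is correct and is essentially the paper's own proof: both identify $\Phi$ with the unit $g/h^{e}=\Phi^{\sharp}(s)\in A_{h}^{\ast}$ and observe that commutativity of the square, chased on points $(w,x)$, is literally the weight-$q$ semi-invariance identity $ (g/h^{e})(w\bullet x)=w^{q}(g/h^{e})(x)$. The only divergence is cosmetic: the appendix version of this proposition in the paper also asserts that $\Phi$ is dominant (obtained there from the non-constancy of a weight-$q$ semi-invariant, so that $\Phi^{\sharp}$ embeds $k[z,z^{-1}]$ into $A_{h}$), but that clause does not appear in the statement you were given, so omitting it is not a gap.
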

We give the proofs of these Propositions in an appendix at the end of the paper.  If $ \beta $ is an action of $ \mathbb{G}_{m} $ on $ \operatorname{Spec}(A) $ of ray type and $ A_{1} $ generates $ A $, then $ \operatorname{Spec}(A)^{s} $ is covered by $ \mathbb{G}_{m} $-stable neighborhoods $ D(h) $ such that $ D(h) $ is a trivial $ \mathbb{G}_{m} $-bundle over $ D(h)//\mathbb{G}_{m} $.

If $ \beta $ is an action of $ \mathbb{G}_{a} $ on $ \operatorname{Spec}(A) $ and we ask when a $ \mathbb{G}_{a} $-stable neighborhood $ D(h) $ is a trivial $ \mathbb{G}_{a} $-bundle over $ D(h)//\mathbb{G}_{a} $, then we arrive at the notion of a principle pair.  The action is quasi-principle if there is an additive polynomial $ b(t) $ and a $ b(t) $-pair $ (g,h) $ such that $ \mathbf{ker}(b(t)) $ stabilizes $ \operatorname{Spec}(A) $.  For such an action we may replace $ \mathbb{G}_{a} $ by $ \mathbb{G}_{a}/\mathbf{ker}(b(t)) \cong \mathbb{G}_{a} $ and assume that the action is free.  Upon making this modification, the affine stable points are the set of points $ x $ such that there exists a $ \mathbb{G}_{a} $-stable neighborhood $ D(h) $ which is a trivial $ \mathbb{G}_{a} $-bundle over $ D(h)//\mathbb{G}_{a} $.

In the case of connected, unipotent, linear algebraic groups, a surjective endomorphism $ \alpha \in \operatorname{End}(G) $ plays the role of a surjective endomorphism of $ \mathbb{G}_{a} $ obtained from an additive polynomial $ c(t) $.  If $ s $ is the dimension of $ G $ as a variety, then it is possible to generalize the notion of a $ c(t) $-pair to an $ \alpha $-pair $ (\mathbf{g},\mathbf{h}) $, where $ \mathbf{g} $ and $ \mathbf{h} $ are $ s $-tuples of elements of $ A $.  If $ \alpha $ is the identity, then an $ \alpha $-pair is a principle pair.  An $ \alpha $-pair is quasi-principle if $ \ker(\alpha) $ acts trivially upon $ \operatorname{Spec}(A) $.  In this case we may similarly replace $ G $ by $ G/\ker(\alpha) $ and assume that the action of $ G $ is free.  Upon making this modification, the affine stable points are the set of points $ x $ such that there exists a $ G $-stable neighborhood $ D(H) $ which is a trivial $ G $-bundle over $ G \backslash \backslash D(H) $.  We explore the theory of affine stability and its geometric implications.  Why would we be interested in such a theory?  In order to construct quotients of a variety by connected, unipotent, linear algebraic groups we must know when an open sub-variety $ U $ of a $ G $-variety $ \operatorname{Spec}(A) $ over a field $ k $ has a quotient $ G \backslash \backslash U $ which is locally of finite type over $ k $.  Also, Hilbert's 14th problem does not have always have an affirmative answer for connected, unipotent, linear algebraic groups, and we would ideally like to have some procedure for answering whether it does.

Hilbert's 14th problem asks the following question.  Given a linear representation $ \beta: G \to \operatorname{GL}(\mathbf{V}) $ of a linear algebraic group $ G $ over a field $ k $, is the ring $ S_{k}(\mathbf{V}^{\ast})^{G} $ a finitely generated $ k $-algebra?  Nagata gave one of the earliest counterexamples to Hilbert's 14th problem involving vector groups.  This counterexample was a specific type of what has been dubbed ``the Nagata action''.  To construct the Nagata action, one starts with the following action $ \gamma $ of $ \mathbb{G}_{a}^{n} $ on $ \operatorname{Spec}(k[x_{1},\dots,x_{2n}]) $.  If $ k[t_{1},\dots,t_{n}] $ is the affine coordinate ring of $ \mathbb{G}_{a}^{n} $, then the co-action of $ \gamma $ is described below:
\begin{align*}
    x_{i} & \mapsto x_{i} \quad 1 \le i \le n \\
    x_{n+i} & \mapsto x_{n+i}+t_{i}x_{i} \quad 1 \le i \le n.
\end{align*}
Nagata then chose $ n $ points $ y_{i} $ of $ \mathbb{A}^{r}_{k} $ in general position.  If $ (a_{i,1},\dots,a_{i,r}) \in \mathbb{A}^{r}_{k} $ is the point $ y_{i} $, then the we obtain a linear algebraic sub-group isomorphic to $ \mathbb{G}_{a}^{n-r} $ from the affine hypersurface $ \mathcal{V}(\langle \sum_{i=1}^{n} a_{i,j} t_{i} \rangle_{j=1}^{r}) \subseteq \mathbb{G}_{a}^{n} $.  The induced action $ \beta $ of $ \mathbb{G}_{a}^{n-r} $ on $ \mathbb{A}^{2n}_{k} $ is the Nagata action for the parameters $ (n,r) $.  Nagata showed that if $ r $ is equal to three and $ n $ is greater than four, then $ k[X]^{\mathbb{G}_{a}^{n-3}} $ is not finitely generated.  Mukai expanded on Nagata's proof in \cite{MukaiCounterexample} by showing that the ring of invariants of the Nagata action is finitely generated if and only if
\begin{equation*}
    \frac{1}{r}+\frac{1}{n-r} \ge \frac{1}{2}.
\end{equation*}
Both proofs started by computing the ring of invariants on the open $ \mathbb{G}_{a}^{n-r} $-stable, subvariety $ D(x_{1}\cdots x_{n}) $ of $ \mathbf{V} $.  Nagata used this computation to write the ring of invariants as a direct sum of colon ideals.  Mukai then elaborated on this construction and proved that the ring of invariants is the Cox ring of a blow up of $ \mathbb{P}^{r-1}_{k} $ at $ n $ points in general position.

An open question is whether Hilbert's 14th problem has an affirmative answer if the linear algebraic group $ G $ is $ \mathbb{G}_{a}^{2} $.  Castravet and Tevelev \cite{CastravetTevelev} show that Nagata's original counterexample fails for the vector group $ \mathbb{G}_{a}^{2} $.

If $ \beta $ is an action of $ \mathbb{G}_{a} $ on a variety $ \operatorname{Spec}(A) $ over a field $ k $ of characteristic zero, and $ g,h \in A $ are elements such that $ g(t_{0} \ast x)=g(x)+t_{0} h(x) $ for a point $ (t_{0},x) \in \mathbb{G}_{a} \times \operatorname{Spec}(A) $, then van den Essen's algorithm allows one to explicitly compute the ring $ A_{h}^{\mathbb{G}_{a}} $.  One nice attribute of van den Essen's algorithm is that it allows one to work with the ring $ A^{\mathbb{G}_{a}}_{h} $ without knowing the specific action that one is working with.  So van den Essen's algorithm is a useful tool to work with actions of $ \mathbb{G}_{a} $ without having access to many details about a specific action.  Other Groebner basis algorithms like \cite{DerksenKemper} allow one to compute the ring of invariants via Groebner basis techniques.  However, one either needs to work with a specific underlying representation or have knowledge about what underlying properties a Groebner basis will have to work in more generality.

In the case of the Nagata action for the parameters $ (n,r) $, it is remarkably trivial to compute the ring $ k[X]_{x_{1}\cdots x_{n}}^{\mathbb{G}_{a}^{n-r}} $.  However, in most cases, such abstract computations become intractable.  Invariant theorists would like to have something similar to van den Essen's algorithm for connected, unipotent, linear algebraic groups.  If invariant theorists could compute the localization of the ring $ A^{G} $ at a suitable invariant $ H $, then they could perhaps write the underlying invariant ring as the Cox ring of a suitable variety $ Z $.  Then, they could examine the geometry of $ Z $ to attempt to determine whether $ A^{G} $ is a finitely generated $ k $-algebra.  However, to do this they would need to explicitly construct the localization of the ring of invariants.  Something akin to van den Essen's algorithm would be a great aid as a result.

In this paper, we generalize van den Essen's algorithm for all connected, unipotent, linear algebraic groups $ G $.  This algorithm works for all quasi-principle $ G $-varieties $ \operatorname{Spec}(A) $ over fields of arbitrary characteristic.

\section{A Generalization of the Theory of Pairs to Connected, Unipotent, Linear Algebraic Groups.}
\begin{dfn} \label{D:gpairGp}
    The monoid $ \mathbf{P}(G) $ is the sub-monoid of $ \operatorname{End}(G) $ generated by $ \alpha $ such that $ G/\ker(\alpha) \cong G $, i.e., the set of surjective endomorphisms of $ G $.  We call $ \mathbf{P}(G) $ the \emph{$ G $-pair monoid}.
\end{dfn}
Let $ G $ be a connected, unipotent, linear algebraic group acting on a variety $ \operatorname{Spec}(A) $ via a left action $ \beta $.  The group $ G $ is connected and unipotent.  As a result, if $ G^{(0)} = G $ and $ G^{(i)}=[G,G^{(i-1)}] $ for $ i>0 $, then $ G^{(i)}/G^{(i+1)} $ is a vector group.  So, if the dimension of $ G $ is $ s $, then there are $ s $ morphisms $ y_{i}: \mathbb{G}_{a} \to G $ such that the map which sends $ (a_{1},\dots,a_{s}) $ to $ y_{1}(a_{1}) \cdot y_{2}(a_{2}) \cdots y_{s}(a_{s}) $ is an isomorphism.  We shall make it a convention for the rest of this section to simply write $ (a_{1},\dots,a_{s}) $ for the image of a point $ (a_{1},\dots,a_{s}) $ in $ G $ under this isomorphism.

Throughout the next two sections, we will make the assumption that if $ G $ acts on $ \operatorname{Spec}(A) $ via a left action, and $ H $ is the maximal sub-group scheme of $ G $ such that $ H $ acts trivially upon $ \operatorname{Spec}(A) $, then $ H $ is finite.

\begin{dfn} \label{D:pairUnip}
    Let $ G $ be a connected, unipotent, linear algebraic group acting on a variety $ \operatorname{Spec}(A) $ via a left action $ \beta $.  If $ \alpha \in \mathbf{P}(G) $, then an $ \alpha $-pair is a collection $ ((g_{1},\dots,g_{s}),(h_{1},\dots,h_{s})) $ (or $ (\mathbf{g},\mathbf{h}) $ for simplicity) such that if $ H $ is equal to $ \prod_{i=1}^{s} h_{i} $ and $ (w,x) \in G \times D(H) $, then
    \begin{itemize}
        \item[i)] the following identity holds: 
            \begin{equation*}
                ((g_{1}/h_{1})(w \ast x),\dots,(g_{s}/h_{s})(w \ast x)) = \alpha(w) \cdot ((g_{1}/h_{1})(x),\dots,(g_{s}/h_{s})(x))
            \end{equation*}
        \item[ii)] the transcendence degree of $ k[g_{1}/h_{1},\dots,g_{s}/h_{s}] $ is equal to $ s $.
    \end{itemize}
    Here $ \cdot $ is the multiplication in $ G $.

    An $ \alpha $-pair is quasi-principle if $ \ker(\alpha) $ acts trivially upon $ \operatorname{Spec}(A) $.  An $ \alpha $-pair $ (\mathbf{g},\mathbf{h}) $ is principle if $ \alpha $ is the identity.

    The \emph{large pedestal ideal} of $ A $ is the ideal generated by $ \prod_{i=1}^{s} h_{i} $ such that there exists a $ \alpha \in \mathbf{P}(G) $ and $ s $-tuples $ (\mathbf{g},\mathbf{h}) $ equal to
    \begin{align*}
        \mathbf{g} &= (g_{1},\dots,g_{s}) \\
        \mathbf{h} &= (h_{1},\dots,h_{s}),
    \end{align*}
    of elements of $ A $ such that $ (\mathbf{g},\mathbf{h}) $ is an $ \alpha $-pair.  The \emph{pedestal ideal} of $ A $ is the ideal generated by zero and all elements $ \prod_{i=1}^{s} h_{i} $ such that there exists a quasi-principle $ \alpha $-pair $ (\mathbf{g},\mathbf{h}) $ where 
    \begin{align*}
        \mathbf{g} &= (g_{1},\dots,g_{s}) \\
        \mathbf{h} &= (h_{1},\dots,h_{s}).
    \end{align*}  
    We denote the large pedestal ideal of $ A $ by $ \mathfrak{P}_{g}(A) $ and the pedestal ideal of $ A $ by $ \mathfrak{P}(A) $.  A point $ x $ is affine stable if $ x \in \operatorname{Spec}(A) \setminus \mathcal{V}(\mathfrak{P}(A)) $.  We shall later show that $ x $ is affine stable if there is a $ G $-stable, open sub-variety $ D(H) $ such that $ D(H) $ is a trivial $ G $-bundle over $ G \backslash \backslash D(H) $.
\end{dfn}
\begin{rmk}
    We should caution the reader that even if $ k $ is a field of characteristic zero and $ G $ is the group $ \mathbb{G}_{a}^{N} $, a pair may not exist if $ N $ is larger than one.  Do not assume that if $ N>1 $ and $ \operatorname{Spec}(A) $ is a $ \mathbb{G}_{a}^{N} $ variety over a field of characteristic zero, then $ \mathfrak{P}(A) \ne 0 $.  Let $ k[t_{1},t_{2}] $ be the affine coordinate ring of $ \mathbb{G}_{a}^{2} $ and let $ \mathbf{V} $ be a vector space of dimension three over a field of characteristic zero with dual basis $ \{x_{1},x_{2},x_{3}\} $.  If $ \beta: \mathbb{G}_{a}^{2} \to \mathbf{V} $ is the representation with the co-action below:
    \begin{align*}
        x_{1} & \mapsto x_{1} \\
        x_{2} & \mapsto x_{2} \\
        x_{3} & \mapsto x_{3}+t_{2}x_{2}+t_{1}x_{1},
    \end{align*}
    then $ \mathfrak{P}(k[X]) $ is equal to zero.
\end{rmk}
\begin{dfn}
    If $ G $ is a connected, unipotent, linear algebraic group, and \linebreak $ \alpha \in \mathbf{P}(G) $, then we obtain a homogeneous space $ G^{\alpha} $ as follows.  The action $ \gamma_{\alpha}: G \times G^{\alpha} $ sends $ (w,s) $ to $ \alpha(w) \cdot s $.
\end{dfn}
\begin{lem} \label{L:alphaPair}
    Let $ G $ be a connected, unipotent, linear algebraic group acting on a variety $ \operatorname{Spec}(A) $ via a left action $ \beta $.  If $ \alpha \in \mathbf{P}(G) $ and $ H \in A^{G} $, then the following are equivalent:
    \begin{itemize}
        \item[a)] there exists a dominant, $ G $-equivariant morphism $ \Phi: D(H) \to G^{\alpha} $,
        \item[b)] there exists an $ \alpha $-pair $ (\mathbf{g},\mathbf{h}) $ such that $ D(H) $ is contained in $ D(\prod_{i=1}^{s} h_{i}) $.
    \end{itemize}
\end{lem}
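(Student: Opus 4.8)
The plan is to recognise that both conditions are two encodings of a single datum: an $s$-tuple of rational functions on $\operatorname{Spec}(A)$, regular on $D(H)$, that transform under $\beta$ the way the coordinates on $G^{\alpha}$ transform under $\gamma_{\alpha}$. The bridge between the two formulations is that, \emph{as a variety}, $G^{\alpha}$ is affine space $\mathbb{A}^{s}_{k}$: using the fixed isomorphism $(a_{1},\dots,a_{s})\mapsto y_{1}(a_{1})\cdots y_{s}(a_{s})$ its coordinate ring is the polynomial ring $k[a_{1},\dots,a_{s}]$. Consequently a morphism $\Phi\colon D(H)\to G^{\alpha}$ is the same thing as a choice of $s$ regular functions $\phi_{i}=\Phi^{\ast}(a_{i})\in A_{H}=\mathcal{O}(D(H))$, with $\Phi(x)=(\phi_{1}(x),\dots,\phi_{s}(x))$.

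For (b) $\Rightarrow$ (a) I would set $\Phi(x)=((g_{1}/h_{1})(x),\dots,(g_{s}/h_{s})(x))$. Since $D(H)\subseteq D(\prod_{i=1}^{s}h_{i})\subseteq D(h_{i})$ for each $i$, every $g_{i}/h_{i}$ lies in $A_{H}$, so $\Phi$ is a well-defined morphism into $G^{\alpha}$. The $G$-equivariance of $\Phi$ with respect to $\gamma_{\alpha}$ is precisely the pointwise transformation law (i), since $\gamma_{\alpha}(w,\Phi(x))=\alpha(w)\cdot\Phi(x)$; here $D(H)$ is $G$-stable because $H\in A^{G}$, so the restriction of $\beta$ to $D(H)$ makes sense. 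Finally $\Phi^{\ast}$ carries $k[a_{1},\dots,a_{s}]$ onto $k[g_{1}/h_{1},\dots,g_{s}/h_{s}]$, whose transcendence degree is $s=\dim G^{\alpha}$ by (ii); as the closure of the image is then an $s$-dimensional closed subvariety of the irreducible $G^{\alpha}$, it is all of $G^{\alpha}$, so $\Phi$ is dominant.

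For (a) $\Rightarrow$ (b) I would run this in reverse. Put $\phi_{i}=\Phi^{\ast}(a_{i})\in A_{H}$ and clear denominators: writing $\phi_{i}=p_{i}/H^{n_{i}}$ and setting $N=\max_{i}n_{i}$, take $h_{i}=H^{N}$ and $g_{i}=p_{i}H^{N-n_{i}}$, so that $g_{i}/h_{i}=\phi_{i}$ in $A_{H}$. Then $\prod_{i=1}^{s}h_{i}=H^{sN}$ and $D(\prod_{i}h_{i})=D(H)$, giving the containment required in (b). Condition (i) is read off as the equivariance of $\Phi$ on $D(\prod_{i}h_{i})=D(H)$. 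For (ii), dominance means $\Phi^{\ast}$ is injective, hence induces an embedding $k(G^{\alpha})\hookrightarrow\operatorname{Frac}(A_{H})$ of function fields; since $a_{1},\dots,a_{s}$ are algebraically independent, so are their images $g_{1}/h_{1},\dots,g_{s}/h_{s}$, whence $k[g_{1}/h_{1},\dots,g_{s}/h_{s}]$ has transcendence degree $s$.

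The substance of the lemma is essentially this dictionary, so no single step is genuinely hard; the points that need care are the identification of $\mathcal{O}(G^{\alpha})$ with a polynomial ring on the coordinates $a_{i}$ (so that ``morphism to $G^{\alpha}$'' really means ``$s$ regular functions''), and the standard but load-bearing equivalence ``$\Phi$ dominant $\iff$ the pulled-back coordinates have transcendence degree $s$'', which relies on $\operatorname{Spec}(A)$ being integral so that $A_{H}$ is a domain. The denominator-clearing in (a) $\Rightarrow$ (b) is the only ad hoc manoeuvre, and it is routine.
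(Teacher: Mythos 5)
Your proposal is correct and follows essentially the same route as the paper's own proof: both directions rest on the dictionary between morphisms $D(H)\to G^{\alpha}\cong\mathbb{A}^{s}_{k}$ and $s$-tuples of elements of $A_{H}$, with equivariance giving condition (i) and dominance (injectivity of $\Phi^{\sharp}$) giving the transcendence-degree condition (ii). Your explicit denominator-clearing via powers of $H$ in (a) $\Rightarrow$ (b) is in fact slightly more careful than the paper, which only asserts $r_{i}=g_{i}/h_{i}$ without pinning the $g_{i},h_{i}$ down in $A$.
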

\begin{proof}
    Assume that a) holds.  If this is true, then there are rational functions $ r_{1},\dots,r_{s} \in A_{H} $ such that $ \Phi(x) $ is equal to $ (r_{1}(x),\dots,r_{s}(x)) $.  For each $ i $ there are functions $ g_{i}, h_{i} \in A_{H} $ such that $ r_{i} = g_{i}/h_{i} $.  Note that $ D(H) $ is contained in $ D(\prod_{i=1}^{s} h_{i}) $, because $ g_{i}/h_{i} \in A_{H} $.  Let $ (\mathbf{g},\mathbf{h}) $ be a pair of $ s $-tuples of $ A $ such that
    \begin{align*}
        \mathbf{g} &= (g_{1},\dots,g_{s}) \\
        \mathbf{h} &= (h_{1},\dots,h_{s}).
    \end{align*}
    We should also note that $ \Phi(x) $ is equal to $ ((g_{1}/h_{1})(x),\dots,(g_{s}/h_{s})(x)) $.  Since $ \Phi $ is $ G $-equivariant, the following diagram commutes:
    \begin{equation*}
    \xymatrix{
        G \times D(H) \ar[rr]^{\beta} \ar[d]^{(\operatorname{id}_{G}, \Phi)} & & D(H) \ar[d]^{\Phi} \\
        G \times G^{\alpha} \ar[rr]^{\gamma_{\alpha}} & & G^{\alpha}
    }.
    \end{equation*}
    As a result,
    \begin{align*}
        \left((g_{1}/h_{1})(w \ast x),\dots,(g_{s}/h_{s})(w \ast x)\right) &= \Phi(w \ast x) \\
        &= \Phi \circ \beta((w, x)) \\
        &= \gamma_{\alpha} \circ (\operatorname{id}_{G}, \Phi)((w,x)) \\
        &= \gamma_{\alpha}\left(w,\left((g_{1}/h_{1})(x),\dots,(g_{s}/h_{s})(x)\right)\right) \\
        &= \alpha(w) \cdot \left((g_{1}/h_{1})(x),\dots,(g_{s}/h_{s})(x)\right).
    \end{align*}
    Therefore $ (\mathbf{g},\mathbf{h}) $ is an $ \alpha $-pair if $ \operatorname{trdeg}_{k}(k[g_{1}/h_{1},\dots,g_{s}/h_{s}]) $ is equal to $ s $.  Because $ \Phi $ is dominant $ \Phi^{\sharp}: k[Y] \to A_{H} $ is injective.  The image of $ \Phi^{\sharp} $ is $ k[g_{1}/h_{1},\dots,g_{s}/h_{s}] $.  Therefore, if $ g_{1}/h_{1},\dots,g_{s}/h_{s} $ are not algebraically independent over $ k $, then $ \Phi^{\sharp} $ is not injective.  As a result, $ \operatorname{trdeg}_{k}(k[g_{1}/h_{1},\dots,g_{s}/h_{s}]) $ is equal to $ s $.  So b) holds.

    If b) holds, then $ (\mathbf{g},\mathbf{h}) $ is an $ \alpha $-pair and $ D(H) $ is contained in $ D(\prod_{i=1}^{s} h_{i}) $.  Let \linebreak $ \Phi: D(H) \to G^{\alpha} $ be the morphism described below:
    \begin{equation*}
        \Phi(x) = \left((g_{1}/h_{1})(x),\dots,(g_{s}/h_{s})(x)\right).
    \end{equation*}
    Because $ (\mathbf{g},\mathbf{h}) $ is an $ \alpha $-pair:
    \begin{align*}
        \Phi \circ \beta(w, x) &= \Phi(w \ast x) \\
        &= \left((g_{1}/h_{1})(w \ast x),\dots,(g_{s}/h_{s})(w \ast x)\right) \\
        &= \alpha(w) \cdot \left((g_{1}/h_{1})(x),\dots,(g_{s}/h_{s})(x)\right) \\
        &= \gamma_{\alpha}\left(w,\left((g_{1}/h_{1})(x),\dots,(g_{s}/h_{s})(x)\right)\right) \\
        &= \gamma_{\alpha} \circ (\operatorname{id}_{G}, \Phi)(w,x).
    \end{align*}
    Therefore, the following diagram commutes:
    \begin{equation*}
    \xymatrix{
        G \times D(H) \ar[rr]^{\beta} \ar[d]^{(\operatorname{id}_{G}, \Phi)} & & D(H) \ar[d]^{\Phi} \\
        G \times G^{\alpha} \ar[rr]^{\gamma_{\alpha}} & & G^{\alpha}
    }.
    \end{equation*}
    So, $ \Phi $ is $ G $-equivariant.  Since $ g_{1}/h_{1},\dots,g_{s}/h_{s} $ are algebraically independent, $ \Phi^{\sharp} $ is an isomorphism of the polynomial ring $ k[Y] $ onto its image $ k[g_{1}/h_{1},\dots,g_{s}/h_{s}] $.  Therefore $ \Phi $ is dominant.
\end{proof}
Let $ H \in A^{G} $.  Later we show an $ \alpha $-pair $ (\mathbf{g},\mathbf{h}) $ such that $ D(H) \subseteq D(\prod_{i=1}^{s} h_{i}) $ exists, if and only if there is a $ G $-equivariant, fppf cover $ \phi: U \to D(H) $ such that 
\begin{itemize}
    \item[i)] for all $ x \in D(H) $, the fibre $ \phi^{-1}(x) = \ker(\alpha) $ and
    \item[ii)] the variety $ U $ is a trivial $ G $-bundle over $ (G \backslash \backslash U) $.
\end{itemize}
\begin{lem} \label{L:connectedNonIdentityGeneral}
    Let $ G $ be a connected, unipotent, linear algebraic group.  If $ \operatorname{Spec}(A) $ is a $ G $-variety with a left $ G $ action, $ \alpha \in \mathbf{P}(G) $, and $ H \in A^{G} $, then $ \Phi: D(H) \to G^{\alpha} $ is a dominant, generically smooth, $ G $-equivariant morphism whose fibres are connected if and only if $ \Phi $ does not factor through a non-automorphism $ \psi \in \mathbf{P}(G) $ and there is no proper, linear algebraic sub-group $ H \le G $ such that $ \Phi $ factors through the inclusion of $ H $ in $ G $.
\end{lem}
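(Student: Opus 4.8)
The plan is to exploit that $G^{\alpha}$ is a single $G$-orbit, so that two of the four clauses are automatic, and then to detect the remaining two (generic smoothness and connectedness of fibres) by means of the finite intermediate variety cut out by the algebraic closure of $\Phi^{\sharp}k(G^{\alpha})$ inside $k(D(H))$. Accordingly, I would first record the geometry of the target. Since $\alpha$ is surjective, the orbit map $w\mapsto\gamma_{\alpha}(w,e)=\alpha(w)$ identifies $G^{\alpha}\cong G/\ker(\alpha)$ and shows $G$ acts transitively on $G^{\alpha}$. Consequently, for any $G$-equivariant $\Phi$ and any closed point $x_{0}\in D(H)$, the assignment $w\mapsto\Phi(w\ast x_{0})=\alpha(w)\cdot\Phi(x_{0})$ already surjects onto $G^{\alpha}$, because $D(H)$ is $G$-stable ($H\in A^{G}$); hence $\Phi$ is dominant. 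Moreover its image, being all of $G^{\alpha}$, cannot be contained in a coset of a proper subgroup, and indeed no such coset is $\gamma_{\alpha}$-stable: if $gH'$ were stable then $g^{-1}\alpha(w)g\in H'$ for all $w$, forcing $G\subseteq gH'g^{-1}$. Thus dominance and the absence of a factoring through a proper subgroup are automatic, and the content of the lemma reduces to the equivalence of (generically smooth $+$ connected fibres) with the absence of a factoring through a non-automorphism.

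For the forward implication I would argue contrapositively. Suppose $\Phi=\psi\circ\Phi'$, where $\psi\in\mathbf{P}(G)$ is a non-automorphism realised as the finite equivariant map $\psi\colon G^{\alpha'}\to G^{\alpha}$ coming from a factorisation $\alpha=\psi\circ\alpha'$. Then $\ker(\psi)$ is finite and nontrivial, and I split on its connected-\'etale decomposition. If $\ker(\psi)$ has nontrivial \'etale part, $\psi$ has separable degree $>1$, so $\Phi^{\sharp}k(G^{\alpha})$ is not separably closed in $k(D(H))$ and the general fibre of $\Phi$ is disconnected. If $\ker(\psi)$ is nontrivial infinitesimal, $\psi$ is purely inseparable of degree $>1$, the extension $k(D(H))/\Phi^{\sharp}k(G^{\alpha})$ is inseparable, and $\Phi$ is not generically smooth. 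Either way one of the two properties fails.

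For the reverse implication I would manufacture the offending non-automorphism whenever a property fails. Let $L=\Phi^{\sharp}k(G^{\alpha})\subseteq M=k(D(H))$ and let $L'$ be the algebraic closure of $L$ in $M$; since $D(H)$ and $G^{\alpha}$ have the same dimension, $L'/L$ is finite. Let $Z$ be the normalisation of $G^{\alpha}$ in $L'$, so that $\pi\colon Z\to G^{\alpha}$ is finite and $\Phi$ factors through $Z$; by the functoriality of normalisation the $G$-action lifts and $\pi$ is $G$-equivariant. As $Z$ is an irreducible $G$-variety mapping finitely and equivariantly onto the homogeneous space $G^{\alpha}$, the transitivity argument above makes $Z$ homogeneous, $Z\cong G/K$ with $K\le\ker(\alpha)$ finite and $\pi$ the projection $G/K\to G/\ker(\alpha)$. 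Now connectedness of the general fibre of $\Phi$ holds exactly when $L$ is separably closed in $M$, i.e.\ when $\pi$ has no nontrivial separable part, and generic smoothness holds exactly when $M/L$ is separable, i.e.\ when $\pi$ has no inseparable part; if both hold then $L=L'$, $\pi$ is an isomorphism and $K=\ker(\alpha)$. Hence if either property fails, $K\subsetneq\ker(\alpha)$, and identifying $G/K\cong G$ lets me read $G\to G/K$ as an $\alpha'\in\mathbf{P}(G)$ and $\pi$ as a non-automorphism $\psi\in\mathbf{P}(G)$ with $\alpha=\psi\circ\alpha'$, through which $\Phi$ factors---contradicting the hypothesis.

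The step I expect to be hardest is this last identification, together with its prerequisite that $Z$ really is a homogeneous space isomorphic to $G$. Showing $Z\cong G/K$ requires knowing that a finite equivariant cover of a homogeneous space is homogeneous, and realising $\pi$ inside $\mathbf{P}(G)$ requires that the quotient of the connected unipotent group $G$ by the relevant finite (\'etale or infinitesimal) subgroup scheme $K\le\ker(\alpha)$ is again isomorphic to $G$; here one leans on the fact that $\alpha$ already exhibits $G/\ker(\alpha)\cong G$ and factors through $G/K$, so that the structure theory of connected unipotent groups supplies the isomorphism. In characteristic zero the whole dichotomy collapses, since every element of $\mathbf{P}(G)$ is then an automorphism and generic smoothness is automatic, so the genuine content of the lemma is the positive-characteristic bookkeeping separating \'etale from infinitesimal kernels.
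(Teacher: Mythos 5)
Your overall architecture is sound and in places cleaner than the paper's. The observation that equivariance plus transitivity of $ G $ on $ G^{\alpha} $ makes dominance (indeed surjectivity) and the ``no proper subgroup'' clause automatic is correct and is a genuine simplification; the paper instead deduces dominance from the non-factoring hypothesis. Your forward direction via the connected--\'etale decomposition of $ \ker(\psi) $ is essentially the paper's argument. For the reverse direction you replace the paper's compactification, resolution of indeterminacies, and Stein factorization by the normalization $ Z $ of $ G^{\alpha} $ in the relative algebraic closure $ L' $ of $ L=\Phi^{\sharp}k(G^{\alpha}) $ in $ M=k(D(H)) $, and then show $ Z $ is homogeneous. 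That part works: homogeneity of $ Z $ follows from a dimension count (the orbit of a point of $ Z $ surjects onto $ G^{\alpha} $, hence is dense open in the irreducible $ s $-dimensional $ Z $, and its complement is a $ G $-stable closed set of dimension $ <s $ that would also have to surject onto $ G^{\alpha} $, so is empty). But note that to factor $ \Phi $ through $ Z $ as a morphism you need the elements of $ L' $ integral over $ k[G^{\alpha}] $ to be regular on $ D(H) $, i.e.\ you need $ A_{H} $ integrally closed; you should first replace $ D(H) $ by its normalization, which affects neither generic smoothness nor connectedness of general fibres.

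The one step that does not hold as written is the biconditional ``generic smoothness holds exactly when $ M/L $ is separable, i.e.\ when $ \pi $ has no inseparable part.'' The first equivalence is fine, but the second is only an implication: separability of $ M/L $ forces $ L'/L $ separable, yet $ M/L $ can fail to be separable while $ L $ is already algebraically closed in $ M $, in which case $ \pi $ is an isomorphism and your construction produces no non-automorphism. For instance $ L=k(u,v)\subseteq M=\operatorname{Frac}\bigl(L[x,y]/(y^{p}-ux^{p}-v)\bigr) $ is inseparable ($ M\otimes_{L}L^{1/p} $ is non-reduced) although $ L $ is algebraically closed in $ M $. This does not refute the lemma --- the example is not equivariant over a homogeneous space --- but it shows the field-theoretic step is unjustified, so your argument establishes connectedness of the general fibre without yet establishing generic smoothness. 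Some extra input is needed, e.g.\ the equivariance identity $ d\Phi_{x}\circ d\beta_{x}\mid_{\mathfrak{g}}=dR_{\Phi(x)}\circ d\alpha $, which shows $ \Phi $ is automatically generically smooth whenever $ \alpha $ is separable and so reduces you to inseparable $ \alpha $. Be aware that the paper's own proof makes exactly the same leap (``if $ \Phi $ is not generically smooth, then the inseparable closure of $ K(G) $ in $ \operatorname{Frac}(A) $ is not equal to $ K(G) $''), so this is a defect you have inherited rather than introduced. Your closing worry about identifying $ G/K $ with $ G $ so as to read $ \pi $ as an element of $ \mathbf{P}(G) $ is also well placed; the paper buries this inside an appeal to simple connectedness of $ \mathbb{P}^{s}_{k} $ and does not resolve it either.
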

\begin{proof}
    One will note that for any $ \psi \in \mathbf{P}(G) $ the kernel of $ \psi $ is finite.  So, if $ \psi $ is generically smooth, then the fibres of $ \psi $ are connected if and only if $ \psi $ an automorphism.  As a result, if $ \Phi $ factors through a separable non-automorphism $ \psi \in \mathbf{P}(G) $, then the fibres of $ \Phi $ are not connected. If the fibres of $ \psi $ are connected and $ \Phi $ factors through a non-automorphism $ \psi $, then $ \psi $ is not separable.  Hence $ \Phi $ is not generically smooth.  As a result, if $ \Phi $ factors through a non-automorphism $ \psi \in \mathbf{P}(G) $, then either $ \Phi $ is not generically smooth, or the fibres of $ \Phi $ are not connected.  Since $ \Phi $ is dominant, it does not factor through any inclusion $ H \le G $ of a proper linear algebraic sub-group $ H $ of $ G $.

    Now suppose that $ \Phi: D(H) \to G^{\alpha} $ is a $ G $-equivariant morphism which does not factor through a non-automorphism $ \psi \in \mathbf{P}(G) $ or an inclusion of a proper sub-group $ H \le G $.  Because $ D(H) $ is affine it embeds as an open sub-variety of a projective variety $ Z $.  Similarly, since $ G $ is a connected, linear algebraic group, it embeds as an open sub-variety of a projective variety $ \mathbb{P}^{s}_{k} $.  We may resolve the indeterminacies of the rational map $ \Phi: Z \dashrightarrow \mathbb{P}^{s}_{k} $ to obtain a morphism $ \Psi: W \to \mathbb{P}^{s}_{k} $.  Here, $ W $ is obtained from $ Z $ via blow-ups and $ \Psi \mid_{D(H)} \cong \Phi $.

    By the Stein factorization theorem there are morphisms $ \psi $ and $ \Psi_{1} $ such that: $ \psi $ is finite, the fibres of $ \Psi_{1} $ are connected, and $ \Psi $ is equal to $ \psi \circ \Phi_{1} $.  Since $ \Phi $ does not factor through the inclusion of any proper, linear algebraic sub-group, $ \Phi $ is dominant.  Hence $ \Psi $ is surjective.  Because $ \psi $ is finite, $ \mathbb{P}^{s}_{k} $ is simply connected and $ \psi \circ \Psi_{1} \mid_{D(H)} \cong \Phi $ is $ G $-equivariant, $ \psi \mid_{\Psi_{1}(D(H))} $ is a surjective endomorphism of $ G $.  An endomorphism $ \psi $ of $ G $ is surjective if and only if $ \psi \in \mathbf{P}(G) $, so $ \psi \in \mathbf{P}(G) $.  By our assumption this morphism is an automorphism.  As a result, the fibres of $ \Phi $ are connected.

    Suppose that $ \Phi $ is not generically smooth.  If this is the case, then the inseparable closure of $ K(G) $ in $ \operatorname{Frac}(A) $ is not equal to $ K(G) $.  Therefore, $ \Phi $ factors through an inseparable endomorphism $ \psi:G \to G $.  Since $ \Phi $ is dominant, the endomorphism $ \psi \in \mathbf{P}(G) $.  This contradicts our assumption that $ \Phi $ does not factor through a non-automorphism of $ \mathbf{P}(G) $.  So $ \Phi $ is generically smooth.  This finishes the proof.
\end{proof}
\begin{lem}
    Let $ G $ be a connected, unipotent, linear algebraic group acting on a variety $ \operatorname{Spec}(A) $ via a left action $ \beta $.  A principle pair exists if and only if an $ \alpha $-pair exists for an automorphism $ \alpha $ of $ G $.
\end{lem}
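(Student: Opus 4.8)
The plan is to prove the two implications separately, with the forward direction immediate and the reverse direction carrying the content. For the forward direction, observe that $ \operatorname{id}_{G} $ is an automorphism of $ G $ and is in particular a surjective endomorphism, so $ \operatorname{id}_{G} \in \mathbf{P}(G) $. A principle pair is by definition an $ \operatorname{id}_{G} $-pair, hence it is already an $ \alpha $-pair for the automorphism $ \alpha = \operatorname{id}_{G} $, and nothing further is required.

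For the reverse direction, suppose $ (\mathbf{g},\mathbf{h}) $ is an $ \alpha $-pair for an automorphism $ \alpha $ of $ G $. I would write $ r_{i} = g_{i}/h_{i} $ and set $ \Phi = (r_{1},\dots,r_{s}) $, viewed as a map into the underlying space of $ G^{\alpha} $ (which is $ G $ itself); condition (i) in Definition \ref{D:pairUnip} says exactly that $ \Phi(w \ast x) = \alpha(w) \cdot \Phi(x) $. The key observation is that, when $ \alpha $ is an automorphism, postcomposition with $ \alpha^{-1} $ furnishes a $ G $-equivariant isomorphism $ G^{\alpha} \xrightarrow{\sim} G^{\operatorname{id}} $. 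Indeed, putting $ \Phi' = \alpha^{-1} \circ \Phi $ and using that $ \alpha^{-1} $ is a group homomorphism, one computes $ \Phi'(w \ast x) = \alpha^{-1}(\alpha(w) \cdot \Phi(x)) = w \cdot \alpha^{-1}(\Phi(x)) = w \cdot \Phi'(x) $, which is precisely the equivariance for the action $ \gamma_{\operatorname{id}} $.

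It then remains to realize $ \Phi' $ as a genuine principle pair. Using the coordinatization $ (a_{1},\dots,a_{s}) \mapsto y_{1}(a_{1}) \cdots y_{s}(a_{s}) $ fixed in this section, $ G $ is isomorphic as a variety to $ \mathbb{A}^{s}_{k} $, so the automorphism $ \alpha^{-1} $ is given by an $ s $-tuple of polynomials $ P_{1},\dots,P_{s} \in k[a_{1},\dots,a_{s}] $ whose comorphism is a $ k $-algebra automorphism of $ k[a_{1},\dots,a_{s}] $. Consequently the $ j $-th coordinate of $ \Phi' $ is $ P_{j}(r_{1},\dots,r_{s}) $, a rational function in $ A $ that is regular on $ D(\prod_{i} h_{i}) $, which I write as $ g_{j}'/h_{j}' $. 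The resulting pair $ (\mathbf{g}',\mathbf{h}') $ satisfies condition (i) for $ \alpha = \operatorname{id}_{G} $ by the computation above. For condition (ii), since the comorphism of $ \alpha^{-1} $ is a $ k $-algebra automorphism, the $ P_{j} $ generate $ k[a_{1},\dots,a_{s}] $, whence $ k[g_{1}'/h_{1}',\dots,g_{s}'/h_{s}'] = k[r_{1},\dots,r_{s}] $; the latter has transcendence degree $ s $ over $ k $ by hypothesis on $ (\mathbf{g},\mathbf{h}) $, so the former does too. Thus $ (\mathbf{g}',\mathbf{h}') $ is a principle pair.

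The step I expect to require the most care is this last one: checking that transporting $ \Phi $ by $ \alpha^{-1} $ yields functions that are genuinely rational functions of $ A $ on the relevant open set and that remain algebraically independent, rather than only a morphism of abstract varieties. Both points rest on the fact that an automorphism of the unipotent group $ G $ is, in the chosen coordinates, a polynomial automorphism of affine space, so I would invoke the variety isomorphism $ G \cong \mathbb{A}^{s}_{k} $ explicitly. Alternatively, one can phrase the whole reverse direction through Lemma \ref{L:alphaPair}, transporting a dominant $ G $-equivariant morphism to $ G^{\alpha} $ into one to $ G^{\operatorname{id}} $ via the isomorphism $ \alpha^{-1} $ and reading off the principle pair from part (b), which absorbs the algebraic-independence bookkeeping into the already-proved equivalence.
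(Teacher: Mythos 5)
Your proposal is correct and takes essentially the same route as the paper: the substantive direction is handled by postcomposing with the $G$-equivariant isomorphism $\alpha^{-1}\colon G^{\alpha}\to G$, which is exactly the paper's argument (the paper packages the translation between pairs and dominant $G$-equivariant morphisms into Lemma~\ref{L:alphaPair}, the alternative you yourself note at the end, while you verify conditions (i) and (ii) by hand via the polynomial automorphism of $\mathbb{A}^{s}_{k}$). The only divergence is that your forward direction uses the trivial witness $\alpha=\operatorname{id}_{G}$, whereas the paper composes with $\alpha$ to produce an $\alpha$-pair for an arbitrary prescribed automorphism; for the existential statement as written, both suffice.
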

\begin{proof}
    Suppose an $ \alpha $-pair $ (\mathbf{g},\mathbf{h}) $ exists.  If $ H $ is equal to $ \prod_{i=1}^{s} h_{i} $, then by Lemma ~\ref{L:alphaPair} there exists a dominant, $ G $-equivariant morphism $ \Phi: D(H) \to G^{\alpha} $.  The morphism $ \alpha^{-1} \circ \Phi: D(H) \to G $ is also a dominant, $ G $-equivariant morphism, so a principle pair exists by Lemma ~\ref{L:alphaPair}.
    
    Similarly if a principle pair $ (\mathbf{g},\mathbf{h}) $ exists and $ H $ is equal to $ \prod_{i=1}^{s} h_{i} $, then there exists a dominant, $ G $-equivariant morphism $ \Phi: D(H) \to G $.  The morphism $ \alpha \circ \Phi: D(H) \to G^{\alpha} $ is a dominant, $ G $-equivariant morphism, so an $ \alpha $-pair exists by Lemma ~\ref{L:alphaPair}
\end{proof}
\begin{prop} \label{P:principlePairUnip}
    Let $ G $ be a connected, unipotent, linear algebraic group acting on a variety $ \operatorname{Spec}(A) $ via a left action $ \beta $.  The following statements are equivalent for $ H \in A^{G} $:
    \begin{itemize}
        \item[a)] the sub-variety $ D(H) $ is a trivial $ G $-bundle over $ G \backslash \backslash D(H)$,
        \item[b)] there is a principle pair $ (\mathbf{g}, \mathbf{h}) $ such that $ D(H) \subseteq D(\prod_{i=1}^{s} h_{i}) $
        \item[c)] there is a generically smooth, dominant, $ G $-equivariant morphism \linebreak $ \Phi: D(H) \to G $ whose fibres are connected,
        \item[d)] there are $ s $-tuples $ (\mathbf{g},\mathbf{h}) $ such that if $ K $ is the scheme $ \mathcal{V}(\langle g_{1},\dots,g_{s} \rangle) \cap D(H) $, then the following conditions hold:
            \begin{itemize}
                \item[i)] the stabilizer $ G_{x} $ is trivial for any $ x \in K $,
                \item[ii)] there is an inclusion of open sub-varieties $ D(H) \subseteq D(\prod_{i=1}^{s} h_{i}) $
                \item[iii)] if $ \iota $ is the natural inclusion of $ K $ in $ D(H) $ and $ \pi $ is the quotient morphism from $ D(H) $ to $ G \backslash \backslash D(H) $, then $ \pi \circ \iota $ is an isomorphism.  Namely, $ K $ is a cross section of $ D(H) $.
            \end{itemize}
    \end{itemize}
\end{prop}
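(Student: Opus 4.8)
The plan is to prove the single cycle of implications (a) $\Rightarrow$ (c) $\Rightarrow$ (b) $\Rightarrow$ (d) $\Rightarrow$ (a). The engine throughout is Lemma~\ref{L:alphaPair} specialised to $\alpha = \operatorname{id}$: a principle pair is exactly an $\operatorname{id}$-pair, and $G^{\operatorname{id}}$ is $G$ acting on itself by left multiplication, so (b) is equivalent to the existence of a dominant, $G$-equivariant morphism $\Phi\colon D(H)\to G$. Writing $\Phi = ((g_1/h_1),\dots,(g_s/h_s))$ in the fixed coordinates $(a_1,\dots,a_s)\mapsto y_1(a_1)\cdots y_s(a_s)$, the identity $e\in G$ is the origin, so $\Phi^{-1}(e)$ is cut out by $g_1 = \cdots = g_s = 0$ on $D(H)$. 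This identification is what lets me pass between the pair $(\mathbf g,\mathbf h)$, the equivariant map $\Phi$, and the cross-section of (d).

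For (a) $\Rightarrow$ (c): a trivialisation $D(H)\cong G\times B$ with $B = G\backslash\backslash D(H)$ and $G$ acting on the left factor makes the projection $\operatorname{pr}_G\colon D(H)\to G$ a surjective (hence dominant), $G$-equivariant morphism whose fibres are all isomorphic to $B$; since $D(H)$ is irreducible, $B$ is connected, and the product structure makes $\operatorname{pr}_G$ generically smooth, giving (c). For (c) $\Rightarrow$ (b): a dominant, $G$-equivariant $\Phi\colon D(H)\to G$ is, by Lemma~\ref{L:alphaPair} with $\alpha = \operatorname{id}$, precisely a principle pair with $D(H)\subseteq D(\prod_i h_i)$ (the generic smoothness and connected fibres of (c) are not needed for this direction).

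For (b) $\Rightarrow$ (d): the first key point is that the equivariance $\Phi(w\ast x) = w\cdot\Phi(x)$ forces the action on $D(H)$ to be free, since $w\in G_x$ gives $\Phi(x) = \Phi(w\ast x) = w\cdot\Phi(x)$, whence $w = e$. Taking $K = \mathcal V(\langle g_1,\dots,g_s\rangle)\cap D(H) = \Phi^{-1}(e)$, condition (i) is immediate and (ii) holds by Lemma~\ref{L:alphaPair}. For (iii) I would exhibit the inverse of $\pi\circ\iota$ explicitly: the morphism $\sigma\colon D(H)\to K$, $\sigma(x) = \Phi(x)^{-1}\ast x$, lands in $K$ because $\Phi(\Phi(x)^{-1}\ast x) = \Phi(x)^{-1}\cdot\Phi(x) = e$, is $G$-invariant since $\sigma(w\ast x) = (w\cdot\Phi(x))^{-1}\ast(w\ast x) = \Phi(x)^{-1}\ast x$, and hence descends to $\bar\sigma\colon G\backslash\backslash D(H)\to K$; one checks $\bar\sigma\circ(\pi\circ\iota) = \operatorname{id}_K$ (as $\sigma|_K = \operatorname{id}$) and $(\pi\circ\iota)\circ\bar\sigma = \operatorname{id}$ (as $\sigma(x)$ lies in the orbit of $x$), so $K$ is a cross-section.

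The main obstacle is (d) $\Rightarrow$ (a). Since stabilisers are conjugate along orbits, condition (i) together with the cross-section property (iii) (which guarantees every orbit meets $K$) upgrades to freeness of the $G$-action on all of $D(H)$. The substance is then to show that the action map $a\colon G\times K\to D(H)$, $(w,y)\mapsto w\ast y$, is an \emph{isomorphism of schemes} realising $D(H)$ as a trivial $G$-bundle with base $K\cong G\backslash\backslash D(H)$; bijectivity on points is formal from freeness and (iii), but promoting this to an isomorphism, and identifying $K$ with the geometric quotient, is where the work lies. I would deduce it from the fact that a free action of a connected unipotent group admits a geometric quotient here and that such groups are special, so the torsor $D(H)\to G\backslash\backslash D(H)$, having the global section $K$, is trivial; alternatively one argues directly by faithfully flat descent along $a$. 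This is the step that most needs care, particularly in positive characteristic, where separability of the relevant quotient maps cannot be taken for granted.
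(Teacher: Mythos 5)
Your cycle (a) $\Rightarrow$ (c) $\Rightarrow$ (b) $\Rightarrow$ (d) $\Rightarrow$ (a) differs from the paper's organisation (the paper proves a) $\Leftrightarrow$ c), c) $\Leftrightarrow$ b) separately and then derives d) from a) rather than from b)), and your first three legs are sound; indeed your (b) $\Rightarrow$ (d) via the explicit retraction $\sigma(x)=\Phi(x)^{-1}\ast x$, which is $G$-invariant and hence descends to an inverse of $\pi\circ\iota$ on the categorical quotient, is cleaner than the paper's route through the quotient $(G\times K)/\Lambda$ in the category of algebraic spaces. Note also that your freeness argument ($w\ast x=x$ implies $w\cdot\Phi(x)=\Phi(x)$, so $w=e$) goes through for $R$-points of any $k$-algebra $R$, so it gives scheme-theoretic triviality of stabilizers, which is the reading of (d)(i) that the statement needs in positive characteristic.

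The genuine gap is in (d) $\Rightarrow$ (a), and it is twofold. First, you assert that (iii) "guarantees every orbit meets $K$," but $\pi\circ\iota$ being an isomorphism only tells you that for each $x\in D(H)$ there is a unique $y\in K$ with $\pi(y)=\pi(x)$; concluding that $y$ lies in the orbit of $x$ requires $\pi$ to separate orbits, which for unipotent (non-reductive) groups is not automatic even though orbits are closed. Without this, surjectivity of the action map $a\colon G\times K\to D(H)$ is unproved, and it is exactly what the paper is (also rather tersely) extracting from the commuting triangle $\pi\circ\beta=\phi^{-1}\circ p_{2}$. Second, even granting bijectivity of $a$ on points, your promotion to an isomorphism is circular as written: a torsor with a global section is trivial for \emph{any} group (specialness of $G$ is not the issue), but invoking this presupposes that $D(H)\to G\backslash\backslash D(H)$ is already a $G$-torsor, i.e.\ that $G\times D(H)\to D(H)\times_{G\backslash\backslash D(H)}D(H)$ is an isomorphism and the quotient map is flat --- which is essentially the statement (a) you are trying to prove. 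In positive characteristic a bijective morphism of varieties can fail to be an isomorphism (Frobenius-type phenomena, e.g.\ the action $t\ast z=z+t^{p}$ of $\mathbb{G}_{a}$ on $\mathbb{A}^{1}$, whose stabilizers are set-theoretically but not scheme-theoretically trivial), so this step cannot be waved through; the paper closes it by forming $(G\times K)/\Lambda$ with $\Lambda$ the stabilizer group scheme over $K$, showing $\nu\colon(G\times K)/\Lambda\to D(H)$ is an isomorphism, and then using $\Lambda=e\times K$ to identify $(G\times K)/\Lambda$ with $G\times K$. You correctly flagged this as the delicate step, but the proposal does not actually supply the argument.
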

\begin{proof}
    If a) holds, then there is an isomorphism $ \lambda :D(H) \to G \times (G \backslash \backslash D(H)) $.  If $ p_{1} $ is the natural projection of $ G \times (G \backslash \backslash D(H)) $ onto the first component, then $ p_{1} \circ \lambda $ is the desired morphism $ \Phi $.  Both $ p_{1} $ and $ \lambda $ are $ G $-equivariant, dominant, and generically smooth.  Also, because $ G $, $ D(H) $ and $ G \backslash \backslash D(H) $ are connected, the fibres of $ p_{2} $ and $ \lambda $ are connected.  Therefore c) holds.  So a) implies c).

    Assume that c) holds.  Let $ Y $ be the scheme $ \Phi^{-1}(e) $ with its reduced induced sub-scheme structure.  The scheme $ Y $ is a variety by Lemma ~\ref{L:connectedNonIdentityGeneral}.  Define a morphism $ \gamma: G \times Y \to D(H) $ which sends $ (w,y) $ to $ w \ast y $.  Suppose that there are two distinct points $ (g_{1},y_{1}) $ and $ (g_{2},y_{2}) $ of $ G \times Y $, such that $ g_{1} \ast y_{1} = g_{2} \ast y_{2} $.  If this is true, then $ y_{1} = (g_{1}^{-1} \cdot g_{2}) \ast y_{2} $.  Because $ Y $ is the scheme theoretic pre-image of $ e $ with its reduced induced scheme structure,
    \begin{align*}
        g_{1}^{-1} \cdot g_{2} &= g_{1}^{-1} \cdot g_{2} \cdot \Phi(y_{2}) \\
        &= \Phi(g_{1}^{-1} \cdot g_{2} \ast y_{2}) \\
        &= \Phi(y_{1}) \\
        &= e.
    \end{align*}
    However, this means that $ g_{1}=g_{2} $ and therefore that $ y_{1}=y_{2} $.  This contradicts our assumption that the two points $ (g_{1},y_{1}) $ and $ (g_{2},y_{2}) $ were distinct.  So $ \gamma $ is injective.  Since $ \Phi $ is generically smooth,
    \begin{align*}
        \dim(A)=s+\dim(Y).
    \end{align*}
    So $ \gamma $ is dominant.  Because the image of $ \gamma $ is closed, $ \gamma $ is dense and closed.  So $ \gamma $ is surjective.  If $ \lambda: D(H) \to G \times Y $ is the morphism which sends a point $ x $ to $ (\Phi(x), \Phi(x)^{-1} \ast x) $, then $ \lambda $ is an inverse to $ \gamma $.  Since $ \Phi $ is generically smooth, so too is $ \lambda $.  Therefore, $ \lambda $ is an isomorphism.  A categorical quotient is unique, so $ Y \cong G \backslash \backslash D(H) $.  So c) implies a).  Therefore c) and a) are equivalent.

    Assume that c) holds.  There are $ s $ rational functions $ r_{1},\dots,r_{s} \in A_{H} $ such that $ \Phi(x) $ is equal to $ (r_{1}(x),\dots,r_{s}(x)) $.  For each $ r_{i} \in A_{H} $ there is a pair $ g_{i},h_{i} $ such that $ r_{i} $ is equal to $ g_{i}/h_{i} $.  Therefore, there are two $ s $-tuples $ (\mathbf{g},\mathbf{h}) $ equal to
    \begin{align*}
        \mathbf{g} &= (g_{1},\dots,g_{s}) \\
        \mathbf{h} &= (h_{1},\dots,h_{s}) \\
    \end{align*}
    such that $ \Phi(x) $ is equal to $ ((g_{1}/h_{1})(x),\dots,(g_{s}/h_{s})(x)) $.  Because $ \Phi $ is $ G $-equivariant, the following diagram commutes:
    \begin{equation*}
    \xymatrix{
        G \times D(H) \ar[rr]^{\beta} \ar[d]^{(\operatorname{id}_{G}, \Phi)} & & D(H) \ar[d]^{\Phi} \\
        G \times G \ar[rr]^{\mu_{G}} & & G
    }.
    \end{equation*}
    So, for $ (w,x) \in G \times D(H) $
    \begin{align*}
        \left((g_{1}/h_{1})(w \ast x),\dots,(g_{s}/h_{s})(w \ast x)\right) &=\Phi(w \ast x) \\
        &= \Phi \circ \beta(w,x) \\
        &= \mu_{G} \circ (\operatorname{id}_{G}, \Phi)(w,x) \\
        &= \mu_{G}\left(w,\left((g_{1}/h_{1})(x),\dots,(g_{s}/h_{s})(x)\right)\right) \\
        &= w \cdot \left((g_{1}/h_{1})(x),\dots,(g_{s}/h_{s})(x)\right).
    \end{align*}
    Therefore, if the transcendence degree of $ \{g_{i}/h_{i}\}_{i=1}^{s} $ is $ s $, then $ (\mathbf{g},\mathbf{h}) $ is a principle pair.  Since $ \Phi $ is dominant $ \Phi^{\sharp}: k[Y] \to A_{H} $ is injective.  The image of $ k[Y] $ under $ \Phi^{\sharp} $ is $ k[g_{1}/h_{1},\dots,g_{s}/h_{s}] $.  If $ \{g_{i}/h_{i}\}_{i=1}^{s} $ is not algebraically independent over $ k $, then $ \Phi^{\sharp} $ is not injective.  So $ (\mathbf{g},\mathbf{h}) $ is a principle pair and c) implies b).

    Assume that b) holds, i.e., a principle pair $ (\mathbf{g},\mathbf{h}) $ exists and $ D(H) \subseteq D(\prod_{i=1}^{s} h_{i}) $.  Define $ \Phi: D(H) \to G $ to be the morphism below
    \begin{equation*}
        \Phi(x)=\left((g_{1}/h_{1})(x),\dots,(g_{s}/h_{s})(x)\right).
    \end{equation*}
    We claim that $ \Phi $ is $ G $-equivariant.  Because $ (\mathbf{g},\mathbf{h}) $ is a principle pair:
    \begin{align*}
        \Phi \circ \beta(w,x) &=\Phi(w \ast x) \\
        &= \left((g_{1}/h_{1})(w \ast x),\dots,(g_{s}/h_{s})(w \ast x)\right) \\
        &= w \cdot\left((g_{1}/h_{1})(x),\dots,(g_{s}/h_{s})(x)\right) \\
        &= \mu_{G}\left(w,\left((g_{1}/h_{1}(x)),\dots,(g_{s}/h_{s})(x)\right)\right) \\
        &= \mu_{G} \circ (\operatorname{id}_{G}, \Phi)(w,x).
    \end{align*}
    Therefore, the following diagram commutes:
    \begin{equation*}
    \xymatrix{
        G \times D(H) \ar[rr]^{\beta} \ar[d]^{(\operatorname{id}_{G}, \Phi)} & & D(H) \ar[d]^{\Phi} \\
        G \times G \ar[rr]^{\mu_{G}} & & G
    }.
    \end{equation*}
    So $ \Phi $ is $ G $-equivariant.  Since $ \Phi^{\sharp} $ maps the affine coordinate ring of $ G $ isomorphically onto $ k[g_{1}/h_{1},\dots,g_{s}/h_{s}] $ and $ \{g_{i}/h_{i}\}_{i=1}^{s} $ is algebraically independent, $ \Phi $ is dominant.  So $ \Phi $ does not factor through an inclusion of a proper, linear algebraic sub-group $ H $ of $ G $.  Suppose that $ \Phi $ factors through a non-automorphism $ \psi \in \mathbf{P}(G) $.  If this is the case, then there is a $ G $-equivariant, dominant morphism $ \Phi_{1}: D(H) \to G^{\alpha} $ for $ \alpha \in \mathbf{P}(G) $ such that $ \Phi = \psi \circ \Phi_{1} $.  So $ \psi \circ \alpha = \operatorname{id} $ contradicting our assumption that $ \psi $ is not an automorphism.  By Lemma ~\ref{L:connectedNonIdentityGeneral} the morphism $ \Phi $ is generically smooth, dominant and its fibres are connected.  So b) is equivalent to c).  Therefore a), b) and c) are equivalent.
    
    If a) holds, then there is an isomorphism $ \lambda: D(H) \to G \times (G \backslash \backslash D(H)) $.  The equivalence of a), b), and c) shows that if $ \Phi $ is the corresponding, $ G $-equivariant morphism $ p_{2} \circ \lambda $, which in turn corresponds to the principle pair $ (\mathbf{g},\mathbf{h}) $ such that $ D(H) \subseteq D(\prod_{i=1}^{s} h_{i}) $, then
    \begin{align*}
        \mathcal{V}(\langle g_{1},\dots,g_{s} \rangle) \cap D(H) &\cong \Phi^{-1}(e) \\
        &\cong G \backslash \backslash D(H).
    \end{align*}
    So $ \pi \circ \iota $ is an isomorphism.  If we denote the variety below by $ K $:
    \begin{equation*}
        K:= \mathcal{V}(\langle g_{1},\dots,g_{s}, \rangle) \cap D(H),
    \end{equation*}
    then $ \beta $ maps $ G \times K $ to $ D(H) $.  If $ \phi $ is the inverse of $ \pi \circ \iota $, then $ \phi \circ \pi $ is a morphism from $ G $ to $ K $ which is constant on orbits.  So if $ p_{2} $ is the projection of $ G \times K $ onto $ K $, then the following diagram commutes:
    \begin{equation*}
    \xymatrix{
        G \times K \ar[drr]^{\phi^{-1} \circ p_{2}} \ar[rr]^{\beta} & & D(H) \ar[d]^{\pi} \\
        & & G \backslash \backslash D(H)
        }.
    \end{equation*}
    As a result, $ \beta: G \times K \to D(H) $ is surjective.  If $ \Lambda $ is the sub-scheme of $ G \times K $ such that for $ G_{x} \cong \Lambda_{x} $ for any $ x \in K $, then $ \beta $ factors through $ (G \times K)/\Lambda $.  Therefore, there is a morphism $ \nu: (G \times K)/\Lambda \to D(H) $.  We should note that the quotient $ (G \times K)/\Lambda $ may not exist in the category of schemes, but does exist in the category of algebraic spaces.  Since $ \beta $ is separable, so too is $ \nu $.  If $ y \in D(H) $ and $ x $ is a point of $ K $ equal to $ \phi \circ \pi(y) $, then suppose there are $ g_{1},g_{2} \in G $ such that
    \begin{align*}
        g_{1} \ast x &= y \\
        &= g_{2} \ast x.
    \end{align*}
    If this is the case, then $ (g_{2}^{-1} \ast g_{1},x) \in \Lambda $.  Therefore $ \nu(g_{1},x)=\nu(g_{2},x) $.  Hence $ \nu $ is an isomorphism.  Since
    \begin{align*}
        G \times K & \cong G \times \left(G \backslash \backslash D(H)\right) \\
        &\cong D(H) \\
        & \cong (G \times K)/\Lambda
    \end{align*}
    if $ x \in K $, the fibre $ \Lambda_{x} \cong G_{x} $ is equal to the identity.  Therefore d) holds.
    
    If d) holds, then the work we did above in proving $ D(H) \cong (G \times K)/\Lambda $ still holds.  Since $ \Lambda_{y} = e $ for any $ y \in K $,
    \begin{align*}
        D(H) &\cong (G \times K)/\Lambda \\
        & \cong (G \times K)/(e \times K) \\
        & \cong G \times K.
    \end{align*}
    Therefore a) holds.
\end{proof}

One may find the notion of principle pairs intuitive, since it is intimately related to the triviality of $ D(H) $ as a $ G $-bundle over $ G \backslash \backslash D(H) $.  However, the notion of an $ \alpha $-pair may seem un-intuitive since $ D(H) $ is not a trivial $ G $-bundle.  However, in the fppf topology, $ D(H) $ is a trivial $ G $-bundle.  Namely there is an fppf neighborhood of $ D(H) $ which is a trivial $ G $-bundle.
\begin{prop}
    Let $ G $ be a connected, unipotent, linear algebraic group acting on a variety $ \operatorname{Spec}(A) $ via a left action $ \beta $.  If $ H \in A^{G} $, then the following are equivalent:
    \begin{itemize}
        \item[i)] there is an $ \alpha $-pair $ (\mathbf{g},\mathbf{h}) $ such that $ D(H) \subseteq D(\prod_{i=1}^{s} h_{i}) $,
        \item[ii)] there is a $ G $-equivariant, fppf cover $ \phi: U \to D(H) $ such that $ U $ is a trivial $ G $ bundle over $ G \backslash \backslash U $ and the fibres $ \phi^{-1}(x) \cong \operatorname{ker}(\alpha) $ for $ x \in D(H) $.
    \end{itemize}
\end{prop}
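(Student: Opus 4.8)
The plan is to recast both conditions in terms of the morphism $\Phi$ furnished by Lemma~\ref{L:alphaPair} and to move between $D(H)$ and $U$ along the isogeny $\alpha\colon G\to G^{\alpha}$. Two facts are used throughout. First, $\alpha$ is itself $G$-equivariant from $G$ (with its left-translation action) to $G^{\alpha}$, since $\alpha(w\cdot g)=\alpha(w)\cdot\alpha(g)=\gamma_{\alpha}(w,\alpha(g))$. Second, because $\alpha$ is a surjective endomorphism with finite kernel, it is finite and faithfully flat, hence an fppf cover whose fibre over a point is a coset of $\ker(\alpha)$, so each fibre is isomorphic to $\ker(\alpha)$. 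By Lemma~\ref{L:alphaPair}, condition (i) is equivalent to the existence of a dominant, $G$-equivariant morphism $\Phi\colon D(H)\to G^{\alpha}$, and I will prove the equivalence with (i) in this reformulated shape.

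For (i) $\Rightarrow$ (ii) I would set $U:=D(H)\times_{G^{\alpha}}G$, the fibre product of $\Phi$ against $\alpha$, with projections $\phi=p_{1}\colon U\to D(H)$ and $\psi=p_{2}\colon U\to G$, and give $U$ the diagonal $G$-action. This action is well defined precisely because $\Phi$ and $\alpha$ are both $G$-equivariant into $G^{\alpha}$, and then $\phi,\psi$ are $G$-equivariant. As the base change of the fppf cover $\alpha$ along $\Phi$, the map $\phi$ is again an fppf cover, with fibre $\alpha^{-1}(\Phi(x))$ over $x$, a single coset of $\ker(\alpha)$; thus $\phi^{-1}(x)\cong\ker(\alpha)$. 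The $G$-action on $U$ is free because $\psi$ is equivariant onto $G$ with its free left-translation action, so, setting $Y:=\psi^{-1}(e)=\Phi^{-1}(e)\times\{e\}$, I would check directly that $(w,y)\mapsto w\ast y$ and $(x,g)\mapsto (g,\ g^{-1}\ast(x,g))$ are mutually inverse, exhibiting $U\cong G\times Y$ and $G\backslash\backslash U\cong Y$. This is the clean, computational half.

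For (ii) $\Rightarrow$ (i) I start from a cover $\phi\colon U\to D(H)$ as in (ii). Since $U$ is a trivial $G$-bundle, Proposition~\ref{P:principlePairUnip} supplies a dominant, $G$-equivariant morphism $\psi\colon U\to G$, namely the projection to the $G$-factor of $U\cong G\times(G\backslash\backslash U)$. Composing with $\alpha$ yields a dominant, $G$-equivariant morphism $\alpha\circ\psi\colon U\to G^{\alpha}$, and the aim is to descend it along the fppf cover $\phi$ to a morphism $\Phi\colon D(H)\to G^{\alpha}$; such a $\Phi$ is then automatically dominant and $G$-equivariant, and Lemma~\ref{L:alphaPair} concludes. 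By fppf descent of morphisms, $\Phi$ exists exactly when $\alpha\circ\psi$ equalizes the two projections $U\times_{D(H)}U\rightrightarrows U$, i.e.\ when $\alpha\circ\psi$ is constant on the fibres of $\phi$.

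Verifying this invariance is the main obstacle. The fibres of $\phi$ are torsors under $\ker(\alpha)$, so the point is that moving within a fibre changes $\psi$ only by right translation by an element of $\ker(\alpha)$, which $\alpha$ annihilates; equivalently, the deck action of $\phi$ acts on $G\times(G\backslash\backslash U)$ through $\ker(\alpha)\subseteq G$ on the $G$-factor. I would establish this by comparing the $\ker(\alpha)$-torsor structure of $\phi$ with the trivialization $\psi$: any $G$-equivariant automorphism of $G\times(G\backslash\backslash U)$ over $D(H)$ has the form $(g,q)\mapsto(g\,c(q),\tau(q))$, and the task is to show the factors $c(q)$ lie in $\ker(\alpha)$, which is exactly the identity confirmed by the model computation in the (i) $\Rightarrow$ (ii) direction. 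Transporting that identification to the abstract cover makes $\alpha\circ\psi$ descend. The subsidiary bookkeeping—affineness of the fibre product, faithful flatness of $\alpha$, and the option of forming quotients in algebraic spaces should they fail to be schemes—I would dispatch along the way.
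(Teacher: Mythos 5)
Your overall route is the paper's: for (i) $\Rightarrow$ (ii) the paper also takes $U$ to be the fibre product of $\Phi$ against $\alpha$ (written there as $\Delta_{G}\times_{(G^{\alpha}\times G^{\alpha})}(G\times D(H))$, which is the same scheme as your $D(H)\times_{G^{\alpha}}G$), and for (ii) $\Rightarrow$ (i) it likewise produces $\Phi$ by pushing $\alpha\circ\Psi$ down along $\phi$. Your first half is sound and in one respect cleaner: you obtain flatness of $\phi$ by base change from the finite, faithfully flat isogeny $\alpha$, whereas the paper compactifies the fibres and invokes constancy of Hilbert polynomials. (For the trivial-bundle claim the paper observes that $\left((w_{1},\dots,w_{s}),(1,\dots,1)\right)$ is a principle pair on $U$ and cites Proposition~\ref{P:principlePairUnip}; your direct construction of mutually inverse maps via $Y=\psi^{-1}(e)$ amounts to the same argument.)

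The soft spot is exactly where you flag it, and you do not close it. For (ii) $\Rightarrow$ (i) you must show $\alpha\circ\psi$ is constant on the fibres of $\phi$, and your justification is that the required identity is ``confirmed by the model computation in the (i) $\Rightarrow$ (ii) direction'' and can be ``transported to the abstract cover.'' But an abstract $U$ satisfying (ii) is not given to you as the model fibre product, and the hypothesis as literally stated --- $\phi^{-1}(x)\cong\ker(\alpha)$ --- is only an abstract isomorphism of fibres; nothing you have written shows that the translation factors $c(q)$ of your deck transformations lie in $\ker(\alpha)$ rather than in some other finite subscheme isomorphic to it. The paper resolves this by reading the hypothesis more strongly: writing points of $U\cong G\times\left(G\backslash\backslash U\right)$ as $(w,\ast)$, it takes $\phi^{-1}(x)\cong\ker(\alpha)$ to mean that two points $(w,\ast)$ and $(w_{1},\ast)$ of a common fibre satisfy $w\cdot w_{1}^{-1}\in\ker(\alpha)$, whence $\alpha(\Psi(y))$ depends only on $\phi(y)$, the map $\Phi$ is well defined, and dominance and equivariance follow from those of $\phi$, $\Psi$, $\alpha$; Lemma~\ref{L:alphaPair} then yields the pair. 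If you adopt that reading of the fibre condition, your argument closes and coincides with the paper's; if you insist on the literal reading, you need to supply the missing identification rather than appeal to the model case.
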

\begin{proof}
    If i) holds, then define an action $ \gamma $ of $ G $ on $ G \times D(H) $ as follows.  For $ w \in G $, and $ (u,x) \in G \times D(H) $, let $ \gamma(w,(u,x)) $ equal $ (w \cdot u, w \ast x) $.  The existence of an $ \alpha $-pair $ (\mathbf{g},\mathbf{h}) $ such that $ D(H) \subseteq D(\prod_{i=1}^{s} h_{i}) $ is equivalent to the fact that the morphism $ \Phi: D(H) \to G^{\alpha} $ which sends $ x $ to $ \left((g_{1}/h_{1})(x),\dots,(g_{s}/h_{s})(x)\right) $ is dominant and $ G $-equivariant by Lemma ~\ref{L:alphaPair}.  As a result, the morphism $ (\alpha,\Phi): G \times D(H) \to G^{\alpha} \times G^{\alpha} $ is $ G $-equivariant.

    The morphism $ (\alpha,\alpha) : G \times G \to G^{\alpha} \times G^{\alpha} $ is $ G $-equivariant.  So, if $ \Delta_{G} $ is the diagonal sub-scheme of $ G \times G $, then $ (\alpha,\alpha): \Delta_{G} \to G^{\alpha} \times G^{\alpha} $.  Let $ U $ be the product $ \Delta_{G} \times_{(G^{\alpha} \times G^{\alpha})} \left(G \times D(H)\right) $.  If $ A_{H}[W] $ is the affine coordinate ring of $ G \times D(H) $, then because $ U $ is isomorphic to $ \Delta_{G} \times_{(G^{\alpha} \times G^{\alpha})}\left(G \times D(H)\right) $,
    \begin{equation} \label{E:96}
        U \cong \mathcal{V}(\langle \alpha^{\sharp}(w_{i})-g_{i}/h_{i}\rangle_{i=1}^{s}).
    \end{equation}
    Equation ~\eqref{E:96} shows that $ \left((w_{1},\dots,w_{s}),(1,\dots,1)\right) $ is a principle pair for the $ G $-action on $ U $.  By Proposition ~\ref{P:principlePairUnip}, the variety $ U $ is a trivial $ G $-bundle over $ G \backslash \backslash U $.

    The following diagram commutes:
    \begin{equation} \label{E:100}
    \xymatrix{
        U \ar[rr]^{p_{2}} \ar[d]^{p_{1}} & & G \times D(H) \ar[d]^{(\alpha,\Phi)} \\
        \Delta_{G} \ar[rr]^{(\alpha,\alpha)} & & G^{\alpha} \times G^{\alpha}
        }.
    \end{equation}
    Let $ \phi: U \to D(H) $ be the natural morphism obtained from the ring homomorphism $ \phi^{\sharp}: A_{H} \to A_{H}[W]/\langle \alpha^{\sharp}(w_{i})-g_{i}/h_{i} \rangle $.  Because $ U $ is a closed sub-scheme of $ G \times D(H) $, the projection which sends $ (w,x) \to x $ is $ \phi $.  So, if $ (w_{1},x) \in \phi^{-1}(x) $, then the diagram in ~\eqref{E:100} shows that $ (\alpha,\Phi)(w_{1},x) \in (\alpha,\alpha)(\Delta_{G}) $.  Therefore, if $ (w_{2},x) $ is another point of $ \phi^{-1}(x) $, then there is a $ g \in G $ such that $ \Phi(x) = g $ and
    \begin{align*}
        (\alpha(w_{1}), \Phi(x)) &= (\alpha,\Phi)(w_{1},x) \\
        &= (g,g) \\
        &= (\alpha, \Phi)(w_{2},x) \\
        &= (\alpha(w_{2}),\Phi(x)).
    \end{align*}
    So $ \alpha(w_{1})=\alpha(w_{2}) $ if $ (w_{1},x),(w_{2},x) \in \phi^{-1}(x) $.  Therefore $ w_{1} \cdot w_{2}^{-1} \in \ker(\alpha) $.  So $ \phi^{-1}(x) \cong \ker(\alpha) $ for all $ x \in D(H) $.  
    
    Because $ G \cong \mathbb{A}^{s}_{k} $, we may embed $ G $ as the open sub-variety
    \begin{align*}
        G & \cong D_{+}(u_{0}) \\
        & \subseteq \operatorname{Proj}(k[u_{0},\dots,u_{s}]).
    \end{align*}
    Let $ Z $ be the closure of $ U $ in $ \mathbb{P}^{s}_{k} \times D(H) $, and let $ \psi: Z \to D(H) $ be the projection onto the second component.  For any point $ x \in D(H) $, the fibre $ \psi^{-1}(x) $ is isomorphic to the closure of $ \ker(\alpha) $ in $ \mathbb{P}^{s}_{k} $.  As a result, the Hilbert polynomial of $ Z_{x} $ does not depend on the point $ x \in D(H) $.  So by \cite[Chapter III, Cohomology, Section 9, Flat Morphisms, Theorem 9.9]{HartshorneAG}, the scheme $ Z $ is flat over $ D(H) $.  This means that $ \phi: U \to D(H) $ is also flat.  A local, flat ring homomorphism is faithfully flat, so $ \phi^{\sharp}_{(w,x)} $ is faithfully flat for each $ (w,x) \in U $.  As a result, $ \phi $ is fppf. Therefore ii) holds.

    Suppose ii) holds.  Let $ y $ be a point in $ \phi^{-1}(x) $, where $ \phi: U \to D(H) $ is the fppf neighborhood mentioned in ii).  Since $ U \cong G \times \left(G \backslash \backslash U\right) $ there is an isomorphism $ \lambda: U \to G \times \left( G \backslash \backslash U\right) $.  If $ p_{1} $ is the projection of $ G \times \left( G \backslash \backslash U \right) $ onto the first component, then $ p_{1} \circ \lambda $ is a dominant, $ G $-equivariant, generically smooth morphism $ \Psi $ whose fibres are connected.  If $ y $ is equal to $ (w,x) $ and $ y_{1} $ is another point in $ \phi^{-1}(x) $ equal to $ (w_{1},x) $ then $ w \cdot w_{1}^{-1} \in \ker(\alpha) $ by our assumption that $ \phi^{-1}(x) \cong \ker(\alpha) $.  As a result, if $ \Phi(x) $ is the map which sends $ x $ to $ \alpha(\Psi(y)) $ for any $ y \in \phi^{-1}(x) $, then $ \Phi $ does not depend on the choice of $ y \in \phi^{-1}(x) $ but only on $ x $.  So $ \Phi $ is well defined.  Note that the following diagram commutes:
    \begin{equation*}
    \xymatrix{
        U \ar[d]^{\phi} \ar[rr]^{\Psi} & & G \ar[d]^{\alpha} \\
        D(H) \ar[rr]^{\Phi} & & G^{\alpha}
        }.
    \end{equation*}
    Therefore, since $ \phi, \alpha $ and $ \Psi $ are dominant and $ G $-equivariant, so too is $ \Phi $.  By Lemma ~\ref{L:alphaPair}, item i) holds.
\end{proof}
As a result, the existence of an $ \alpha $-pair is intimately related to local triviality in the fppf topology.
\begin{thm}
    Let $ G $ be a connected, unipotent, linear algebraic group acting on a variety $ \operatorname{Spec}(A) $ via a left action $ \beta $.  If $ H \in A^{G} $, then the following are equivalent:
    \begin{itemize}
        \item[a)] there is a quasi-principle $ \alpha $-pair $ (\mathbf{g},\mathbf{h}) $ such that $ D(H) \subseteq D(\prod_{i=1}^{s} h_{i}) $,
        \item[b)] the variety $ D(H) \cong \left((G/\ker(\alpha)) \times (G \backslash \backslash D(H))\right) $,
        \item[c)] the kernel of $ \alpha $ acts trivially upon $ D(H) $ and there is a generically smooth, dominant, $ G $-equivariant, morphism $ \Phi: D(H) \to G^{\alpha} $ whose fibres are connected.
    \end{itemize}
\end{thm}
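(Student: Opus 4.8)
The plan is to reduce the entire statement to Proposition~\ref{P:principlePairUnip} by passing to the quotient group $ \bar{G} := G/\ker(\alpha) $. Since $ \alpha \in \mathbf{P}(G) $, we have $ G/\ker(\alpha) \cong G $, so $ \bar{G} $ is again a connected, unipotent, linear algebraic group isomorphic to $ G $; writing $ q: G \to \bar{G} $ for the quotient, $ \alpha $ factors as $ \alpha = \bar{\alpha} \circ q $ where $ \bar{\alpha}: \bar{G} \to G $ is an isomorphism. First I would record two bookkeeping facts. The map $ s \mapsto \bar{\alpha}^{-1}(s) $ is an isomorphism $ G^{\alpha} \to \bar{G} $ which is $ G $-equivariant once we let $ G $ act on $ \bar{G} $ through $ q $ followed by left translation; equivalently, it identifies $ G^{\alpha} $ (whose action $ \gamma_{\alpha} $ manifestly factors through $ q $) with $ \bar{G} $ carrying its left-regular $ \bar{G} $-action. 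Consequently a $ G $-equivariant morphism $ D(H) \to G^{\alpha} $ is the same datum as a $ \bar{G} $-equivariant morphism $ D(H) \to \bar{G} $, and these correspond compatibly with dominance, generic smoothness, and connectedness of fibres. Second, whenever $ \ker(\alpha) $ acts trivially on $ D(H) $ the rings of invariants coincide, so $ G \backslash \backslash D(H) = \bar{G} \backslash \backslash D(H) $.

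The key observation is that each of a), b), c) carries within it the assertion that $ \ker(\alpha) $ acts trivially on $ D(H) $: in a) this is exactly the quasi-principle hypothesis; in c) it is stated outright; and in b) it follows because the postulated ($ G $-equivariant) isomorphism exhibits $ G $ as acting on the first factor through $ q $, on which $ \ker(\alpha) $ is trivial. Thus in every direction I may assume $ \ker(\alpha) $ acts trivially and regard $ D(H) $ as a $ \bar{G} $-variety, at which point the three conditions translate into the three items of Proposition~\ref{P:principlePairUnip} for the group $ \bar{G} $.

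Concretely, I would argue as follows. By Lemma~\ref{L:alphaPair}, the existence of a quasi-principle $ \alpha $-pair $ (\mathbf{g},\mathbf{h}) $ with $ D(H) \subseteq D(\prod_{i} h_{i}) $ is equivalent to $ \ker(\alpha) $ acting trivially together with a dominant, $ G $-equivariant morphism $ \Phi: D(H) \to G^{\alpha} $; transporting along $ \bar{\alpha} $, this is a dominant, $ \bar{G} $-equivariant morphism $ D(H) \to \bar{G} $, that is, by Lemma~\ref{L:alphaPair} applied to $ \bar{G} $ with the identity endomorphism, a principle pair for the $ \bar{G} $-action. Hence a) is equivalent to item b) of Proposition~\ref{P:principlePairUnip} for $ \bar{G} $. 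Under the same identification, condition c) transports precisely to item c) of that proposition, while condition b) transports to item a), namely $ D(H) \cong \bar{G} \times (\bar{G} \backslash \backslash D(H)) = (G/\ker(\alpha)) \times (G \backslash \backslash D(H)) $, using $ G \backslash \backslash D(H) = \bar{G} \backslash \backslash D(H) $. Since items a), b), c) of Proposition~\ref{P:principlePairUnip} are equivalent, so are a), b), c) here.

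The main obstacle I expect is not the logical skeleton but the verification that the identification $ G^{\alpha} \cong \bar{G} $ is genuinely $ G $-equivariant and compatible with every adjective in play, together with confirming that the isomorphism in b) is to be read $ G $-equivariantly so that it indeed forces $ \ker(\alpha) $ to act trivially. One should also check that the upgrade from a bare dominant morphism to a generically smooth one with connected fibres, needed for the direction a)$ \Rightarrow $c), is already supplied by the equivalence of items b) and c) in Proposition~\ref{P:principlePairUnip}, rather than requiring a fresh appeal to Lemma~\ref{L:connectedNonIdentityGeneral}.
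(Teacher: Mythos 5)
Your proposal is correct and follows exactly the paper's route: the paper's entire proof is the single sentence ``Upon replacing $G$ with $G/\ker(\alpha)$ this theorem becomes parts a), b) and c) of Proposition~\ref{P:principlePairUnip},'' and your argument is a careful elaboration of that reduction, including the bookkeeping (the $G$-equivariant identification $G^{\alpha} \cong G/\ker(\alpha)$ and the fact that each of a), b), c) forces $\ker(\alpha)$ to act trivially) that the paper leaves implicit.
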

\begin{proof}
    Upon replacing $ G $ with $ G/\ker(\alpha) $ this theorem becomes parts a), b) and c) of Proposition ~\ref{P:principlePairUnip}.
\end{proof}
\begin{thm}\label{T:dixmierUnip}
    Let $ G $ be a connected, unipotent, linear algebraic group acting on a variety $ \operatorname{Spec}(A) $ via a left action $ \beta $.
    
    If $ (\mathbf{g},\mathbf{h}) $ is a principle pair, $ A $ is equal to $ k[z_{1},\dots,z_{n}] $ as a $ k $-algebra, $ H $ is equal to $ \prod_{i=1}^{s} h_{i} $, and $ \beta^{\sharp}(z_{i}) $ is equal to $ v_{i}(Z,Y) $, then there are elements $ b_{1},\dots,b_{s} \in A_{H} $ such that the point $ \left((g_{1}/h_{1})(x),\dots,(g_{s}/h_{s})(x)\right)^{-1} \in G $ is equal to $ (b_{1}(x),\dots,b_{s}(x)) $.  If $ f_{i}(Z) $ is equal to $ v_{i}(Z,b_{1},\dots,b_{s}) $, then $ f_{i}(Z) \in A_{H}^{G} $ and
    \begin{equation*}
        A_{H}^{G}= k[f_{1}(Z),\dots,f_{n}(Z)].
    \end{equation*}
\end{thm}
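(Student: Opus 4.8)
The plan is to realize the $f_i$ as pullbacks of the coordinate functions $z_i$ along the canonical retraction of $D(H)$ onto the cross-section supplied by the principle pair, and then to run the van den Essen / Dixmier argument in the form of an idempotent $k$-algebra endomorphism of $A_H$. First I would package the pair into the dominant, $G$-equivariant morphism $\Phi\colon D(H)\to G$, $\Phi(x)=((g_1/h_1)(x),\dots,(g_s/h_s)(x))$, furnished by Lemma~\ref{L:alphaPair} (with $\alpha=\operatorname{id}$); its defining property is the equivariance $\Phi(w\ast x)=w\cdot\Phi(x)$. Because $G$ is connected and unipotent it is isomorphic to $\mathbb{A}^s_k$ as a variety and inversion $w\mapsto w^{-1}$ is a morphism, hence given by regular (indeed polynomial) functions $\iota_1,\dots,\iota_s$ in the chosen coordinates. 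Setting $b_j=\iota_j(g_1/h_1,\dots,g_s/h_s)\in A_H$ produces the required $b_1,\dots,b_s$ with $\Phi(x)^{-1}=(b_1(x),\dots,b_s(x))$, which settles the first assertion.

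Next I would introduce the retraction $\pi\colon D(H)\to D(H)$, $\pi(x)=\Phi(x)^{-1}\ast x$. Applying equivariance of $\Phi$ with the (pointwise) group element $\Phi(x)^{-1}$ gives $\Phi(\pi(x))=\Phi(x)^{-1}\cdot\Phi(x)=e$, so $\pi$ lands in the fibre $K=\Phi^{-1}(e)=\mathcal{V}(\langle g_1,\dots,g_s\rangle)\cap D(H)$, the cross-section of Proposition~\ref{P:principlePairUnip}; moreover $\pi$ restricts to the identity on $K$, whence $\pi\circ\pi=\pi$. Reading off the $i$-th coordinate through the co-action $\beta^\sharp(z_i)=v_i(Z,Y)$ at the point $w=\Phi(x)^{-1}$, i.e.\ with $y_j(w)=b_j(x)$, yields $z_i(\pi(x))=v_i(Z,b_1,\dots,b_s)=f_i$. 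Thus $f_i=\pi^\sharp(z_i)$, where $\pi^\sharp\colon A_H\to A_H$ is the idempotent $k$-algebra endomorphism pulling back along $\pi$.

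Invariance $f_i\in A_H^G$ is then immediate: for $(w,x)\in G\times D(H)$, equivariance gives $\pi(w\ast x)=\Phi(x)^{-1}\cdot w^{-1}\ast(w\ast x)=\Phi(x)^{-1}\ast x=\pi(x)$, so $f_i(w\ast x)=z_i(\pi(w\ast x))=z_i(\pi(x))=f_i(x)$. The same computation proves more: an element $a\in A_H$ is $G$-invariant if and only if $\pi^\sharp(a)=a$ — one direction because invariance lets us absorb the pointwise element $\Phi(x)^{-1}$, the other because $\pi(w\ast x)=\pi(x)$. Hence $A_H^G=\operatorname{Fix}(\pi^\sharp)$, and since $\pi^\sharp$ is idempotent, $\operatorname{Fix}(\pi^\sharp)=\operatorname{Image}(\pi^\sharp)$.

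Finally I would identify $\operatorname{Image}(\pi^\sharp)$ with $k[f_1,\dots,f_n]$: as $A_H$ is generated over $k$ by $z_1,\dots,z_n$ together with the localization at $H$, its image under the ring map $\pi^\sharp$ is generated by $f_1=\pi^\sharp(z_1),\dots,f_n=\pi^\sharp(z_n)$ and by $\pi^\sharp(1/H)=1/H(f_1,\dots,f_n)$, where $H(f_1,\dots,f_n)=\pi^\sharp(H)$ already lies in $k[f_1,\dots,f_n]$. The delicate point, which I expect to be the main obstacle, is exactly this last bookkeeping of denominators: one must control the localization introduced by $1/H$ and verify that it contributes nothing beyond $k[f_1,\dots,f_n]$ (equivalently, that $\pi^\sharp(H)$ is inverted inside the algebra the $f_i$ generate). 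This is where the interplay between the possible non-invariance of the individual $h_i$, the identification $K\cong G\backslash\backslash D(H)$ from Proposition~\ref{P:principlePairUnip}, and the explicit shape $f_i=v_i(Z,b_1,\dots,b_s)$ must be used with care.
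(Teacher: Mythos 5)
Your argument is in substance the same as the paper's: the paper proves invariance of the $f_i$ via the cocycle identity $v_i(w_2\ast x,w_1)=v_i(x,w_1\cdot w_2)$ and proves generation by writing $r=\beta^{\sharp}(r)=r(v_1(Z,Y),\dots,v_n(Z,Y))$ and substituting $b_i$ for $y_i$; your retraction $\pi(x)=\Phi(x)^{-1}\ast x$ with $f_i=\pi^{\sharp}(z_i)$ and $A_H^G=\operatorname{Fix}(\pi^{\sharp})=\operatorname{Image}(\pi^{\sharp})$ is exactly that computation repackaged as an idempotent endomorphism, and your first three paragraphs (existence of the $b_j$ via the inversion morphism of $G\cong\mathbb{A}^s_k$, the identity $\pi(w\ast x)=\pi(x)$, and the fixed-point characterization of $A_H^G$) are correct and match the paper step for step.

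The ``delicate point'' you flag at the end is a genuine issue, but it is an issue with the theorem's conclusion rather than a defect of your method, and the paper's own proof does not address it either: from $r(Z)=r(f_1,\dots,f_n)$ applied to $r=P(Z)/H(Z)^m$ one only obtains $r\in k[f_1,\dots,f_n]\bigl[1/\pi^{\sharp}(H)\bigr]$, i.e.\ $A_H^G=k[f_1,\dots,f_n]_{H(f_1,\dots,f_n)}$, and the localization cannot in general be dropped. Concretely, for $G=\mathbb{G}_a$ acting on $k[z_1,z_2]$ by $z_2\mapsto z_2+tz_1$ with principle pair $(z_2,z_1)$ one gets $b_1=-z_2/z_1$, hence $f_1=z_1$ and $f_2=0$, so $k[f_1,f_2]=k[z_1]$ while $A_{z_1}^{G}=k[z_1,z_1^{-1}]$. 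So you should not try to show that $\pi^{\sharp}(H)$ becomes a unit in $k[f_1,\dots,f_n]$ --- it does not --- but rather note that $\pi^{\sharp}(H)=H(f_1,\dots,f_n)$ already lies in $k[f_1,\dots,f_n]$ (and equals $H$ when $H$ is invariant), and state the conclusion as $A_H^G=k[f_1,\dots,f_n]\bigl[1/\pi^{\sharp}(H)\bigr]$. With that correction your proof closes completely.
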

\begin{proof}
    If $ w_{1},w_{2} \in G $ and $ x \in D(H) $, then
    \begin{align}
        v_{i}(w_{2} \ast x,w_{1}) &= z_{i}(w_{1} \ast (w_{2} \ast x)), \notag \\
        &= z_{i}((w_{1} \cdot w_{2}) \ast x), \notag \\
        &= v_{i}(x,w_{1} \cdot w_{2}). \label{E:3}
    \end{align}
    Equating the left and right hand sides of ~\eqref{E:3} we obtain
    \begin{equation} \label{E:4}
        v_{i}(w_{2} \ast x ,w_{1}) = v_{i}(x,w_{1} \cdot w_{2}).
    \end{equation}
    If $ w \in G $ and $ x \in D(H) $, then set $ w_{1} $ and $ w_{2} $ to be the following points:
    \begin{align*}
        w_{1} &= (b_{1}(w \ast x),\dots,b_{s}(w \ast w)) \\
        w_{2} &= w.
    \end{align*}
    The following calculations show that $ f_{i}(Z) $ is constant on the orbits of points \linebreak $ x \in D(H) $,
    \begin{align*}
        f_{i}(w \ast x) &= v_{i}(w \ast x, b_{1}(w \ast x),\dots,b_{s}(w \ast x)) \\
        &=v_{i}(w \ast x, b_{1}(w \ast x),\dots,b_{s}(w \ast x)) \\
        &=v_{i}\left(w \ast x, \left((g_{1}/h_{1})(w \ast x),\dots (g_{s}/h_{s})(w \ast x)\right)^{-1} \right) \\
        &= v_{i}\left(w \ast x, \left(w \cdot (g_{1}/h_{1})(x),\dots (g_{s}/h_{s})(x)\right)^{-1}  \right) \\
        &= v_{i}(w \ast x, \left((g_{1}/h_{1})(x),\dots,(g_{s}/h_{s})(x)\right)^{-1} \cdot w^{-1} ) \\
        &= v_{i}(x, \left((g_{1}/h_{1})(x),\dots,(g_{s}/h_{s})(x)\right)^{-1} \cdot w^{-1} \cdot w ) \\
        &= v_{i}(x, \left((g_{1}/h_{1})(x),\dots,(g_{s}/h_{s})(x)\right)^{-1}) \\
        &= v_{i}(x, b_{1}(x),\dots,b_{s}(x)) \\
        &= f_{i}(x),
    \end{align*}
    where the jump from fifth line to the sixth line uses ~\eqref{E:4}.  Since $ f_{i}(Z) $ is invariant on the orbits of $ G $, it is in $ A_{H}^{G} $.

    If $ r(Z) \in A_{H}^{G} $ and $ k[Y] $ is the affine coordinate ring of $ G $, then
    \begin{align}
        r(Z) &= \beta^{\sharp}(r(Z)), \notag \\
        &= r(v_{1}(Z,Y),\dots,v_{n}(Z,Y)) \label{E:5}
    \end{align}
    The left hand side of ~\eqref{E:5} does not depend on $ y_{1},\dots,y_{s} $, so we may substitute $ b_{i} $ for $ y_{i} $ to obtain
    \begin{equation*}
        r(Z) = r(f_{1}(Z),\dots,f_{n}(Z)).
    \end{equation*}
    Therefore $ A_{H}^{G} $ is equal to $ k[f_{1}(Z),\dots,f_{n}(Z)] $.
\end{proof}
\section{Open Questions.}
The following are open questions that I would like to leave for the reader.  The first is regarding the pair monoid $ \mathbf{P}(G) $.  If $ G $ is the ring $ \mathbb{G}_{a} $, then any surjective endomorphism $ \psi: \mathbb{G}_{a} \to \mathbb{G}_{a} $ is determined by an additive polynomial $ c(t) $.  These endomorphisms correspond to elements of the Ore ring $ k[F] $ where addition is pointwise addition, but multiplication is composition.  Namely $ aF^{i}(bF^{j}) $ is equal to $ ab^{p^{i}}F^{i+j} $.  This does have a generalization when the linear algebraic group $ G $ is a connected, unipotent, linear algebraic group.
\begin{dfn}
    Let $ G $ be an $ s $-dimensional, connected, linear algebraic group.  The set $ \mathfrak{O}(G) $ is the $ k $-algebra composed of ring homomorphisms $ \phi: k[Y] \to k[X] $ such that $ \mu_{G}^{\sharp} \circ \phi= (\phi \otimes \phi) \circ \mu_{G}^{\sharp} $ and $ \epsilon_{G}^{\sharp} \circ \phi = \epsilon_{G}^{\sharp} $.
\end{dfn}
The set $ \mathfrak{O}(\mathbb{G}_{a}) $ may be endowed with the structure of a ring.  However, we cannot a'priori guarantee that $ \mathfrak{O}(G) $ has the structure of a ring since it is not clear if $ \mathfrak{O}(G) $, is closed under addition.
\begin{lem}
    The set $ \mathfrak{O}(G) $ is a monoid with respect to composition.
\end{lem}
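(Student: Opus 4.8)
The plan is to verify the two monoid axioms directly, since associativity of composition of ring homomorphisms is automatic and needs no comment. What must be checked is that the identity homomorphism lies in $ \mathfrak{O}(G) $ and that $ \mathfrak{O}(G) $ is closed under composition. Conceptually, the defining conditions $ \mu_{G}^{\sharp} \circ \phi = (\phi \otimes \phi) \circ \mu_{G}^{\sharp} $ and $ \epsilon_{G}^{\sharp} \circ \phi = \epsilon_{G}^{\sharp} $ say exactly that the morphism $ G \to G $ dual to $ \phi $ respects comultiplication and the counit, i.e.\ is an endomorphism of $ G $ as an algebraic group fixing the identity; such endomorphisms evidently compose, so the result is expected. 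Throughout I treat $ k[Y] $ and $ k[X] $ as the single coordinate ring $ k[G] $, so that composition of these homomorphisms is well defined.

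First I would show $ \operatorname{id} \in \mathfrak{O}(G) $, which is immediate: $ \mu_{G}^{\sharp} \circ \operatorname{id} = \mu_{G}^{\sharp} = (\operatorname{id} \otimes \operatorname{id}) \circ \mu_{G}^{\sharp} $ and $ \epsilon_{G}^{\sharp} \circ \operatorname{id} = \epsilon_{G}^{\sharp} $, so both defining identities hold. For closure, let $ \phi, \psi \in \mathfrak{O}(G) $ and consider $ \phi \circ \psi $. For the comultiplication condition I would compute, applying first the hypothesis on $ \phi $ and then the hypothesis on $ \psi $,
\begin{align*}
    \mu_{G}^{\sharp} \circ (\phi \circ \psi) &= (\phi \otimes \phi) \circ \mu_{G}^{\sharp} \circ \psi \\
    &= (\phi \otimes \phi) \circ (\psi \otimes \psi) \circ \mu_{G}^{\sharp} \\
    &= \left((\phi \circ \psi) \otimes (\phi \circ \psi)\right) \circ \mu_{G}^{\sharp},
\end{align*}
where the final equality is the bifunctoriality of the tensor product, $ (\phi \otimes \phi) \circ (\psi \otimes \psi) = (\phi \circ \psi) \otimes (\phi \circ \psi) $. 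For the counit condition I would compute $ \epsilon_{G}^{\sharp} \circ (\phi \circ \psi) = (\epsilon_{G}^{\sharp} \circ \phi) \circ \psi = \epsilon_{G}^{\sharp} \circ \psi = \epsilon_{G}^{\sharp} $. Hence $ \phi \circ \psi \in \mathfrak{O}(G) $, and together with the identity element this exhibits $ \mathfrak{O}(G) $ as a monoid under composition.

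There is no substantive obstacle here; the only point requiring care is the bifunctoriality step, which is the single place where the fact that $ \phi $ and $ \psi $ are ring homomorphisms (so that $ \otimes $ behaves functorially on them) is genuinely used. It is worth recording the contrast with addition flagged just before the statement: the sum of two ring homomorphisms need not be a ring homomorphism, so $ \mathfrak{O}(G) $ need not be closed under addition, which is precisely why one obtains a monoid rather than a ring in general.
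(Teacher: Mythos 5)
Your proposal is correct and follows essentially the same route as the paper's own proof: the identical chain of equalities for the comultiplication condition (using $(\phi\otimes\phi)\circ(\psi\otimes\psi)=(\phi\circ\psi)\otimes(\phi\circ\psi)$), the same two-step computation for the counit condition, and the same closing observation that composition is associative and the identity lies in $\mathfrak{O}(G)$. Your added remarks on the dual geometric interpretation and the contrast with addition are consistent with the paper's surrounding discussion but do not change the argument.
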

\begin{proof}
    If $ \phi_{1},\phi_{2} \in \mathfrak{O}(G) $, then by the definition of $ \mathfrak{O}(G) $:
    \begin{align*}
        \mu_{G}^{\sharp} \circ (\phi_{1} \circ \phi_{2}) &= (\phi_{1} \otimes \phi_{1}) \circ \mu_{G}^{\sharp} \circ \phi_{2} \\
        &= (\phi_{1}\otimes \phi_{1}) \circ (\phi_{2} \otimes \phi_{2}) \circ \mu_{G}^{\sharp} \\
        &= (\phi_{1} \circ \phi_{2} \otimes \phi_{1} \circ \phi_{2}) \circ \mu_{G}^{\sharp} \\
        \epsilon_{G}^{\sharp} \circ (\phi_{1} \circ \phi_{2}) &= \epsilon_{G}^{\sharp} \circ \phi_{2} \\
        &= \epsilon_{G}^{\sharp}.
    \end{align*}
    Therefore $ \phi_{1} \circ \phi_{2} \in \mathfrak{O}(G) $.  Composition is associative and the identity is an element of $ \mathfrak{O}(G) $, so $ \mathfrak{O}(G) $ is a monoid.
\end{proof}
If $ \mathfrak{O}(G) $ was closed under addition, it would have the structure of a ring.

\emph{Question 1} Is $ \mathfrak{O}(G) $ closed under addition for all connected, unipotent, linear algebraic groups $ G $?

If the answer to Question 1 is yes, then the objects we are really concerned about are injective elements of $ \mathfrak{O}(G) $.  In the case of the Ore ring, the only non-injective element is zero.  This may be thought of as the contraction of $ \mathbb{G}_{a} $ to a point.  However, there are many elements of $ \mathfrak{O}(G) $ that are non-injective for connected, unipotent, linear algebraic groups of dimension greater than one.  What structure do the injective elements have?  Also, the Ore ring has the structure of a non-commutative Euclidean ideal domain.

\emph{Question 2} If the answer to question one is yes, then is there some generalization of the division algorithm to $ \mathfrak{O}(G) $ when $ G $ is a connected, unipotent, linear algebraic group of dimension greater than one?  Such a generalization might involve monomial orders and/or Groebner bases if it exists.

In \cite{Maguire} the author classified all $ \mathbb{G}_{a} $-representations $ \beta: \mathbb{G}_{a} \to \operatorname{GL}(\mathbf{V}) $ such that $ \mathfrak{P}_{g}(S_{k}(\mathbf{V}^{\ast})) $ is equal to zero.

\emph{Question 3} Describe representations $ \beta: G \to \operatorname{GL}(\mathbf{V}) $ of connected, unipotent, linear algebraic groups $ G $ such that $ \mathfrak{P}_{g}(S_{k}(\mathbf{V}^{\ast})) $ is equal to zero.  Are there invariants $ H $ such that it is possible to compute the ring $ S_{k}(\mathbf{V}^{\ast})^{G}_{H} $?  If so, describe how to compute the localization of the ring of invariants.

\emph{Question 4} This question the same as question 3, but for representations connected, unipotent, linear algebraic groups such that the corresponding large pedestal ideal is non-zero, but the pedestal ideal is zero.
\section{Appendix: An Aside on Reductive Groups.}
\begin{dfn}
    The variety $ \mathbb{G}_{m} $ has multiple $ \mathbb{G}_{m} $ actions on it.  One is the action which sends a point $ (\xi,z) $ to $ \xi z $.  However, there is also the action which sends $ (\xi,z) $ to $ \xi^{q} z $.  We shall denote the variety $ \mathbb{G}_{m} $ with this action by $ \left(\mathbb{G}_{m} \right)^{q} $ and the action by $ \gamma_{q} $.  Also recall that if $ q \in \mathbb{N} $, then we obtain an endomorphism of $ \mathbb{G}_{m} $ from the map which sends $ y $ to $ y^{q} $.
\end{dfn}
\begin{prop} \label{P:generalBundleRed}
    Let $ \operatorname{Spec}(A) $ be a $ \mathbb{G}_{m} $-variety with action $ \beta $.  If $ D(h) $ is a $ \mathbb{G}_{m} $-stable sub-variety of $ \operatorname{Spec}(A) $ and $ q \in \mathbb{N} $, then the following are equivalent:
    \begin{itemize}
        \item[a)] there is a dominant, $ \mathbb{G}_{m} $-equivariant morphism $ \Phi:D(h) \to (\mathbb{G}_{m})^{q} $ such that the following diagram commutes:
            \begin{equation} \label{E:24}
            \xymatrix{
                \mathbb{G}_{m} \times D(h) \ar[d]^{(\operatorname{id}_{\mathbb{G}_{m}}, \Phi)} \ar[rr]^{\beta} & & D(h) \ar[d]^{\Phi} \\
                \mathbb{G}_{m} \times \left(\mathbb{G}_{m}\right)^{q} \ar[rr]^{\gamma_{q}} & & \left(\mathbb{G}_{m}\right)^{q}
                }.
            \end{equation}
        \item[b)] there is an $ e \in \mathbb{N}_{0} $ and $ g \in A_{h}^{\ast} $ such that $ g/h^{e} $ is a semi-invariant of weight $ q $, i.e.,
            \begin{equation*}
                (g/h^{e})(w \bullet x) = w^{q} (g/h^{e})(x),
            \end{equation*}
            for all $ w \in \mathbb{G}_{m} $ and $ x \in D(h) $.
    \end{itemize}
\end{prop}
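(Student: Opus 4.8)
The plan is to build a dictionary between morphisms $ D(h) \to \mathbb{G}_{m} $ and units of the localized ring $ A_{h} $, exactly as in the proof of Lemma~\ref{L:alphaPair}, and then to match the equivariance condition against the weight-$ q $ semi-invariance condition. Since $ \mathbb{G}_{m} = \operatorname{Spec}(k[y,y^{-1}]) $, a morphism $ \Phi: D(h) \to (\mathbb{G}_{m})^{q} $ is the same datum as a ring homomorphism $ \Phi^{\sharp}: k[y,y^{-1}] \to A_{h} $, which is determined by the image $ u := \Phi^{\sharp}(y) $; because $ y $ is a unit, $ u $ must be invertible in $ A_{h} $. Writing such a unit as $ u = g/h^{e} $ with $ g \in A $ and $ e \in \mathbb{N}_{0} $ furnishes the candidate semi-invariant, and conversely any unit $ g/h^{e} \in A_{h}^{\ast} $ defines a morphism to $ \mathbb{G}_{m} $.

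For the direction a)$ \Rightarrow $b), I would start from a dominant, equivariant $ \Phi $, set $ u = \Phi^{\sharp}(y) = g/h^{e} $, and unwind the commuting square in~\eqref{E:24}. The co-action of $ \gamma_{q} $ on the coordinate $ y $ of $ (\mathbb{G}_{m})^{q} $ is $ y \mapsto \xi^{q} \otimes y $, where $ \xi $ is the coordinate on the acting copy of $ \mathbb{G}_{m} $, precisely because $ \gamma_{q}(w,s) = w^{q} s $. Chasing $ y $ around the diagram then yields the pointwise identity $ u(w \bullet x) = w^{q} u(x) $, which is the statement that $ g/h^{e} $ is a semi-invariant of weight $ q $. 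This is a formal computation identical in spirit to the displayed chains in Lemma~\ref{L:alphaPair} and Proposition~\ref{P:principlePairUnip}.

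For b)$ \Rightarrow $a), given a unit $ u = g/h^{e} \in A_{h}^{\ast} $ that is a weight-$ q $ semi-invariant, I would define $ \Phi(x) = u(x) $; because $ u $ is invertible in $ A_{h} $ it never vanishes on $ D(h) $, so $ \Phi $ genuinely lands in $ \mathbb{G}_{m} $, and the semi-invariance identity is exactly the commutativity of~\eqref{E:24}, so $ \Phi $ is $ \mathbb{G}_{m} $-equivariant. It remains to check dominance. Here the one point needing care is that $ \Phi $ could a priori collapse the one-dimensional target to a point, i.e.\ $ u $ could be a constant $ c \in k^{\ast} $; but then semi-invariance would force $ c = w^{q} c $ for every $ w \in \mathbb{G}_{m} $, hence $ w^{q} = 1 $ identically, which is impossible for $ q \in \mathbb{N} $. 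Thus a nonzero weight-$ q $ semi-invariant is necessarily non-constant, and since $ D(h) $ is an open subvariety of the irreducible $ \operatorname{Spec}(A) $, a non-constant map from it to the one-dimensional variety $ \mathbb{G}_{m} $ is automatically dominant.

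The main obstacle---really the only non-formal point---is this dominance argument: one must observe that for $ q \ge 1 $ the weight condition rules out constant semi-invariants, so dominance comes for free rather than needing to be hypothesized. Everything else is the standard translation between the geometry of maps into $ \mathbb{G}_{m} $ and the algebra of units in $ A_{h} $, together with the bookkeeping that identifies semi-invariance of weight $ q $ with equivariance for the twisted action $ \gamma_{q} $.
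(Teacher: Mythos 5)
Your proposal is correct and follows essentially the same route as the paper: a formal diagram chase identifying the commutativity of~\eqref{E:24} with the weight-$q$ semi-invariance of $g/h^{e}$, followed by the observation that a weight-$q$ semi-invariant with $q \in \mathbb{N}$ is non-constant and hence gives a dominant map to the one-dimensional target. Your explicit justification of non-constancy (via $c = w^{q}c$ forcing $w^{q}=1$) fills in a step the paper merely asserts, but the argument is the same.
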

\begin{proof}
    If a) holds, then there is some $ g \in A $ and $ e \in \mathbb{N} $, such that $ \Phi $ sends $ x \in D(h) $ to $ (g/h^{e})(x) $.  Because the diagram in ~\eqref{E:24} commutes, if $ x \in D(h) $ and $ \xi \in \mathbb{G}_{m} $, then
    \begin{align*}
        \left(g/h^{e}\right)(\xi \bullet x) &= \Phi(\xi \bullet x) \\
        &=\Phi \circ \beta(\xi,x) \\
        &=\gamma_{q} \circ (\operatorname{id}_{\mathbb{G}_{m}},\Phi)(\xi,x) \\
        &=\gamma_{q}\left(\xi,(g/h^{e})(x) \right) \\
        &= \xi^{q} (g/h^{e})(x).
    \end{align*}
    As a result, b) holds.

    If b) holds, then define a morphism $ \Phi:D(h) \to (\mathbb{G}_{m})^{q} $ by sending $ x \in D(h) $ to $ (g/h^{e})(x) $.  If $ x \in D(h) $ and $ \xi \in \mathbb{G}_{m} $, then since $ g/h^{e} $ is a semi-invariant of weight $ q $:
    \begin{align}
        \Phi \circ \beta(\xi, x) &= \Phi(\xi \bullet x), \notag \\
        &= (g/h^{e})(\xi \bullet x), \notag \\
        &=  \xi^{q} (g/h^{e})(x), \notag \\
        &= \gamma_{q}(\xi, (g/h^{e})(x)), \notag \\
        &= \gamma_{q} \circ (\operatorname{id}_{\mathbb{G}_{m}}, \Phi)(\xi,x) . \label{E:8}
    \end{align}
    Equating both sides of ~\eqref{E:8} shows that the diagram in ~\eqref{E:24} commutes.  Since $ (g/h^{e}) $ is a semi-invariant of weight $ q \in \mathbb{N} $, it is non-constant.  Therefore, $ \Phi^{\sharp} $ maps $ k[z,z^{-1}] $ isomorphically onto $ k[g/h^{e},h^{e}/g] $.  So, $ \Phi^{\sharp} $ is injective.  Hence $ \Phi $ is dominant.
\end{proof}
\begin{lem} \label{L:connectedNonIdentityRed}
    If $ \ell $ is a natural number, $ \operatorname{Spec}(A) $ is a $ \mathbb{G}_{m} $-variety, and $ D(h) $ is a $ \mathbb{G}_{m} $-stable neighborhood, then a $ \mathbb{G}_{m} $-equivariant morphism $ \Psi: D(h) \to (\mathbb{G}_{m})^{\ell} $ is dominant, generically smooth and its fibres are connected if and only if $ \Psi $ does not factor through any non-identity endomorphism $ q: \mathbb{G}_{m} \to \mathbb{G}_{m} $ and if $ \Psi $ does not contract $ D(h) $ to a point.
\end{lem}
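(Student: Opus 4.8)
The plan is to mirror the proof of Lemma~\ref{L:connectedNonIdentityGeneral}, specializing the connected unipotent group to the torus $\mathbb{G}_m$ and the homogeneous space $G^\alpha$ to $(\mathbb{G}_m)^\ell$. The essential simplification is that every endomorphism of $\mathbb{G}_m$ is a power map $y \mapsto y^q$ with $q \in \mathbb{N}$, of which the identity is $q=1$ and the contraction to a point is $q=0$; so the two hypotheses on $\Psi$ (not factoring through a non-identity endomorphism, and not contracting to a point) are exactly the torus analogues of ``$\Phi$ does not factor through a non-automorphism'' and ``$\Phi$ is dominant / does not factor through a proper subgroup.'' Here the role of ``automorphism of $G$'' is played by ``identity of $\mathbb{G}_m$,'' since among the positive power maps only $q=1$ is an isomorphism.

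For the forward implication I would argue by contrapositive. First, if $\Psi$ contracts $D(h)$ to a point then its image is a single point of the one-dimensional variety $(\mathbb{G}_m)^\ell$, so $\Psi$ is not dominant. Next, suppose $\Psi = q \circ \Psi_1$ factors through the power map $q \colon y \mapsto y^q$ with $q \ge 2$. This map has finite kernel $\mu_q$, so exactly as in Lemma~\ref{L:connectedNonIdentityGeneral}: if $q$ is separable (that is, $\operatorname{char}(k) \nmid q$) then each fibre of $q$ consists of $q > 1$ points and the generic fibre of $\Psi$ is disconnected; if $q$ is inseparable ($\operatorname{char}(k) \mid q$) then $q$ is not generically smooth and hence neither is $\Psi$. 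In either case one of the three desired properties fails.

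For the reverse implication, assume $\Psi$ neither factors through a non-identity power map nor contracts $D(h)$ to a point. Dominance is immediate: $D(h)$ is irreducible, so its image in the curve $(\mathbb{G}_m)^\ell$ is an irreducible constructible set that is not a point, hence dense. For the remaining two properties I would compactify, embedding $D(h)$ as an open subvariety of a projective $Z$ and $\mathbb{G}_m$ in $\mathbb{P}^1_k$, resolve the indeterminacies of the induced rational map $Z \dashrightarrow \mathbb{P}^1_k$ to obtain a morphism $\bar\Psi \colon W \to \mathbb{P}^1_k$ with $\bar\Psi\!\mid_{D(h)} \cong \Psi$, and apply Stein factorization $\bar\Psi = \psi \circ \Psi_1$ with $\psi$ finite and $\Psi_1$ having connected fibres. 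Since $\Psi$ is dominant, $\bar\Psi$ is surjective; since $\psi$ is finite, $\mathbb{P}^1_k$ is simply connected, and $\psi \circ \Psi_1\!\mid_{D(h)} \cong \Psi$ is $\mathbb{G}_m$-equivariant, the map $\psi$ restricted over the open orbit is a finite surjective endomorphism of $\mathbb{G}_m$, that is, a power map. By hypothesis this power map is the identity, so the fibres of $\Psi$ agree with those of $\Psi_1$ and are connected. Finally, for generic smoothness, were $\Psi$ not generically smooth, the inseparable closure of $K(\mathbb{G}_m)$ in $\operatorname{Frac}(A)$ would properly contain $K(\mathbb{G}_m)$, forcing $\Psi$ to factor through an inseparable power map $y \mapsto y^{p^{a}}$ with $a \ge 1$, contradicting the hypothesis; hence $\Psi$ is generically smooth.

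The step I expect to be the main obstacle is controlling the finite part $\psi$ of the Stein factorization: one must check that the equivariance, together with the simple connectivity of $\mathbb{P}^1_k$, genuinely pins $\psi$ down to a power map of $\mathbb{G}_m$ rather than some other finite cover, and that the weight bookkeeping is consistent, namely that factoring $\Psi$ (of weight $\ell$) through $y \mapsto y^q$ forces $q \mid \ell$ with $\Psi_1$ of weight $\ell/q$. The separable-versus-inseparable dichotomy for power maps and the identification of ``automorphism of $\mathbb{G}_m$ among positive power maps'' with ``identity'' are the points where the torus case genuinely differs from the unipotent case, and where I would take the most care.
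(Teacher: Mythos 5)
Your proposal is correct and follows essentially the same route as the paper's proof: the contrapositive for the forward direction via the separable/inseparable dichotomy on the power map $y \mapsto y^{q}$, and for the converse the compactification, resolution of indeterminacies, Stein factorization with the finite part identified as a power map by equivariance, and generic smoothness via the purely inseparable closure of $K(\mathbb{G}_m)$ in $\operatorname{Frac}(A)$. The only cosmetic difference is that you obtain dominance directly from irreducibility of the image in a curve, while the paper extracts it inside the Stein factorization argument.
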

\begin{proof}
    Suppose that $ \Psi $ factors through a non-identity endomorphism $ q $ of $ \mathbb{G}_{m} $.  If $ q $ is equal to $ p^{\ell} d $ where $ p \nmid d $, then the following diagram of field extensions commutes:
    \begin{equation*}
    \xymatrix{
        \operatorname{Frac}(A) \ar@{-}[d] \\
        k(z) \cong K(\mathbb{G}_{m}) \ar@{-}[d] \\
        k(z^{p^{\ell}}) \ar@{-}[d] \\
        k(z^{q}) \cong K(\mathbb{G}_{m}/\mu_{q})
        }.
    \end{equation*}
    If $ \ell >0 $, then $ \Psi $ is not generically smooth.  Suppose that $ \ell $ is equal to zero, but $ d $ is not.  If this is the case, then the fibre of any closed point of $ (\mathbb{G}_{m})^{\ell} $ under $ \Psi $ is the union of $ d $ connected irreducible components.  Therefore, the fibres of $ \Psi $ are not connected.  If $ \Psi $ contracts $ D(h) $ to a point, then $ \Psi $ is not dominant.  What we have shown is the contrapositive form of the statement that if $ \Psi $ is a generically smooth, dominant $ \mathbb{G}_{m} $-equivariant morphism from $ D(h) $ to $ (\mathbb{G}_{m})^{\ell} $ whose fibres are connected, then $ \Psi $ does not factor through any non-identity endomorphism $ q: \mathbb{G}_{m} \to \mathbb{G}_{m} $ and $ \Psi $ does not contract $ D(h) $ to a point.  

    Assume that $ \Psi $ does not factor through a non-trivial endomorphism $ q $ of $ \mathbb{G}_{m} $ and does not contract $ D(h) $ to a point.  Since $ \operatorname{Spec}(A) $ is affine, it is an open sub-variety of a projective variety $ Z $.  We may resolve the indeterminacies of the rational map $ \Psi: Z \dashrightarrow \mathbb{P}^{1}_{k} $ to obtain a morphism $ \Phi: W \to \mathbb{P}^{1}_{k} $.  Here $ W $ is obtained from $ Z $ via blow-ups and $ \Phi \mid_{D(h)} \cong \Psi $.

    By the Stein factorization theorem \cite[III, Cohomomology, Section 11, The Theorem on Formal Functions, Corollary 11.5]{HartshorneAG} the morphism $ \Phi $ factors as $ q_{1} \circ \Phi_{1} $ where the fibres of $ \Phi_{1} $ are connected and $ q_{1} $ is finite.  If $ \Phi $ is not dominant, then $ \Psi $ is not dominant.  So the image of $ \Psi $ is contained in a closed set.  The image of a connected set is connected, so $ \Psi $ must contract $ D(h) $ to a point.  This is a contradiction so $ \Phi $ is surjective.  
    
    Since $ \Phi \mid_{D(h)} \cong \Psi $, the morphism $ q_{1} \mid_{D(h)} $ is a finite endomorphism of \linebreak $ D(z) \subseteq \mathbb{A}^{1}_{k} $.  Because $ \Psi $ is $ \mathbb{G}_{m} $ equivariant, $ q_{1} \mid_{D(z)} $ is an endomorphism $ q $ of $ \mathbb{G}_{m} $.  The endomorphism $ q $ must be the identity.  This shows that the fibres of $ \Psi $ are connected.  If $ \Psi $ is not generically smooth, let $ L $ be the purely inseparable closure of $ K((\mathbb{G}_{m}^{\ell})) $ in $ \operatorname{Frac}(A) $.  The field $ L $ is an extension of $ k $ of pure transcendence degree one.  So $ L $ is equal to $ k(z) $ for some $ z $ and $ K((\mathbb{G}_{m})^{\ell}) $ is equal to $ k(z^{p^{i}}) $ for some natural number $ i $.  So some power of $ p $ would divide $ q $.  This would mean that $ q $ is not the identity.  Therefore, if $ \Psi $ is a $ \mathbb{G}_{m} $-equivariant morphism which does not factor through a non-trivial endomorphism $ q: \mathbb{G}_{m} \to \mathbb{G}_{m} $ and $ \Psi $ does not contract $ D(h) $ to a point, then the fibres of $ \Psi $ are connected and $ \Psi $ is generically smooth and dominant.
\end{proof}
\begin{prop} \label{P:trivialBundleRed}
    Let $ \operatorname{Spec}(A) $ be an affine $ \mathbb{G}_{m} $-variety over an algebraically closed field $ k $ with action $ \beta $, and let $ D(h) $ be a $ \mathbb{G}_{m} $-equivariant sub-variety.  The following are equivalent:
    \begin{itemize}
        \item[a)] the variety $ D(h) $ is a trivial $ \mathbb{G}_{m} $ bundle over $ D(h)//\mathbb{G}_{m} $,
        \item[b)] there is a $ \mathbb{G}_{m} $-equivariant, generically smooth, dominant, morphism \linebreak $ \Psi: D(h) \to \mathbb{G}_{m} $ with connected fibres,
        \item[c)] there is a rational function $ g/h^{e} \in A_{h}^{\ast} $, which is a semi-invariant of weight one, i.e, $ (g/h^{e})(w \bullet x) = w \left((g/h^{e})(x)\right) $ for all $ w \in \mathbb{G}_{m} $ and $ x \in D(h) $.
    \end{itemize}
\end{prop}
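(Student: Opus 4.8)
The plan is to establish the two equivalences $b \Leftrightarrow c$ and $a \Leftrightarrow b$ separately, since $b$ sits naturally between the purely algebraic condition $c$ and the geometric condition $a$. The first equivalence will fall out almost immediately from the two results already proved in the appendix, namely Proposition~\ref{P:generalBundleRed} and Lemma~\ref{L:connectedNonIdentityRed}, both specialized to the weight $q = \ell = 1$. The second equivalence is the substantive part, and I would prove it by transplanting, essentially verbatim, the trivial-bundle argument from Proposition~\ref{P:principlePairUnip}; that argument is purely formal in the group and uses only equivariance together with generic smoothness and connectedness of fibres, so it applies equally well to $G = \mathbb{G}_m$ even though $\mathbb{G}_m$ is reductive rather than unipotent.

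For $b \Leftrightarrow c$: a rational function $g/h^e \in A_h^\ast$ that is a weight-one semi-invariant is exactly the data of a dominant, $\mathbb{G}_m$-equivariant morphism $\Psi \colon D(h) \to \mathbb{G}_m = (\mathbb{G}_m)^1$ sending $x$ to $(g/h^e)(x)$, and this dictionary is precisely Proposition~\ref{P:generalBundleRed} with $q = 1$. To upgrade such a $\Psi$ to one that is generically smooth with connected fibres, and conversely, I would invoke Lemma~\ref{L:connectedNonIdentityRed} with $\ell = 1$. The one point that genuinely uses weight \emph{one} is the non-factorization hypothesis of that lemma: if $\Psi$ factored as $[q] \circ \Psi_1$ through the endomorphism $y \mapsto y^q$ with $q \ge 2$, then $\Psi_1$ would correspond to a semi-invariant of weight $1/q$, which is impossible since weights are integers. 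Dominance rules out the remaining possibility that $\Psi$ contracts $D(h)$ to a point, so the lemma yields generic smoothness and connected fibres, giving $c \Rightarrow b$; the reverse direction $b \Rightarrow c$ is the immediate reading of the commuting square through Proposition~\ref{P:generalBundleRed}.

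For $a \Leftrightarrow b$: the implication $a \Rightarrow b$ is obtained by taking $\Psi = p_1 \circ \lambda$, where $\lambda \colon D(h) \xrightarrow{\sim} \mathbb{G}_m \times (D(h)//\mathbb{G}_m)$ is the trivialization and $p_1$ is the first projection; this is equivariant, dominant, and generically smooth, with fibres isomorphic to the connected variety $D(h)//\mathbb{G}_m$. For $b \Rightarrow a$ I would set $Y := \Psi^{-1}(e)$ with its reduced structure, which is a variety by Lemma~\ref{L:connectedNonIdentityRed}, and define $\gamma \colon \mathbb{G}_m \times Y \to D(h)$ by $(w,y) \mapsto w \bullet y$. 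Injectivity follows because $\Psi(y) = e$ for $y \in Y$ forces $g_1^{-1} g_2 = \Psi(g_1^{-1} g_2 \bullet y_2) = e$ whenever $g_1 \bullet y_1 = g_2 \bullet y_2$; generic smoothness of $\Psi$ gives $\dim D(h) = 1 + \dim Y$, so $\gamma$ is dominant with closed image, hence surjective; and $\lambda \colon x \mapsto (\Psi(x), \Psi(x)^{-1} \bullet x)$ is a generically smooth two-sided inverse, so $\gamma$ is an isomorphism. Uniqueness of the categorical quotient then identifies $Y$ with $D(h)//\mathbb{G}_m$, establishing $a$.

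I expect the main obstacle to be the $b \Rightarrow a$ step, where one must check that $Y$ is genuinely a variety (irreducible and reduced, which is where the connectedness of fibres furnished by Lemma~\ref{L:connectedNonIdentityRed} is essential) and that the dimension count really forces surjectivity of $\gamma$. The delicate accompanying point is the weight-one factorization argument in $c \Rightarrow b$, which is exactly what distinguishes the trivial-bundle case from the general weight-$q$ situation treated in Proposition~\ref{P:generalBundleRed}.
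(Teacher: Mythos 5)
Your proposal is correct and follows essentially the same route as the paper's own proof: the $a\Leftrightarrow b$ equivalence via $Y=\Psi^{-1}(e)$, the map $\gamma(w,y)=w\bullet y$ with its explicit inverse $\lambda(x)=(\Psi(x),\Psi(x)^{-1}\bullet x)$, and uniqueness of the categorical quotient; and the $b\Leftrightarrow c$ equivalence via Proposition~\ref{P:generalBundleRed} together with the weight-$1/q$ impossibility argument feeding into Lemma~\ref{L:connectedNonIdentityRed}. No substantive differences.
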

\begin{proof}
    If $ D(h) $ is a trivial $ \mathbb{G}_{m} $-bundle over $ D(h)//\mathbb{G}_{m} $, then there is an isomorphism $ \lambda $ from $ D(h) $ to $ \mathbb{G}_{m} \times D(h)//\mathbb{G}_{m} $.  If $ p_{1}: \mathbb{G}_{m} \times D(h)//\mathbb{G}_{m} \to \mathbb{G}_{m} $ is the natural projection morphism, then $ p_{1} \circ \lambda $ is a $ \mathbb{G}_{m} $-equivariant morphism $ \Psi $.  Because $ A_{h} $ is an integral domain, it has no idempotents.  Therefore $ D(h) $ is connected.  Because $ \lambda $ is an isomorphism, so too is $ Y $.  Therefore the fibres of $ \Psi $ are connected.  Because projections and isomorphisms are generically smooth and dominant, $ \Psi $ is generically smooth and dominant.  So, a) implies b).

    Assume $ \Psi: D(h) \to \mathbb{G}_{m} $ is a generically smooth, dominant $ \mathbb{G}_{m} $-equivariant morphism whose fibres are connected.  Let $ Y $ be the scheme theoretic pre-image of the identity with its reduced induced scheme structure.  Because the fibres of $ \Psi $ are connected, $ Y $ is a variety.  Let $ \gamma $ be the morphism which sends a point $ (w,y) $ of $ \mathbb{G}_{m} \times Y $ to $ w \bullet y $.  We claim that $ \gamma $ is injective.  If $ (w_{1},y_{1}) $ and $ (w_{2},y_{2}) $ are two distinct points of $ \mathbb{G}_{m} \times Y $ such that $ w_{1} \bullet y_{1} = w_{2} \bullet y_{2} $, then $ y_{1} = (w_{1}^{-1}w_{2}) \bullet y_{2} $.  Because $ Y $ is the scheme theoretic pre-image of $ 1 $ under $ \Psi $,
    \begin{align*}
        w_{1}^{-1} w_{2} &= w_{1}^{-1}w_{2} \Psi(y_{2}) \\
        &= \Psi\left((w_{1}^{-1}w_{2})\bullet y_{2} \right) \\
        &= \Psi(y_{1}) \\
        &= 1.
    \end{align*}
    Therefore $ w_{1} $ is equal to $ w_{2} $.  However, this means
    \begin{align*}
        y_{1} &= (w_{1}^{-1} w_{1}) \bullet y_{2} \\
        &= y_{2},
    \end{align*}
    which contradicts our assumption that the points $ (w_{1},y_{1}) $ and $ (w_{2},y_{2}) $ are distinct.  Now let $ \lambda: D(h) \to \mathbb{G}_{m} \times Y $ be the map below:
     \begin{equation*}
        \lambda(x)=(\Psi(x),\Psi(x)^{-1} \bullet x).
     \end{equation*}
     Since $ \lambda $ and $ \gamma $ are inverses to one another, $ \gamma $ is an isomorphism onto its image.  It is also surjective, since $ \lambda $ maps $ D(h) $ to $ \mathbb{G}_{m} \times Y $.  Because $ \Psi $ is generically smooth, so too is $ \lambda $.  Therefore, $ \lambda $ is an isomorphism of $ D(h) $ onto $ \mathbb{G}_{m} \times Y $.  A categorical quotient is unique, so $ Y \cong D(h)//\mathbb{G}_{m} $.  So b) and a) are equivalent.

    If b) holds, then the following diagram commutes:
    \begin{equation} \label{E:52}
    \xymatrix{
        \mathbb{G}_{m} \times D(h) \ar[rr]^{\beta} \ar[d]^{(\operatorname{id}_{\mathbb{G}_{m}},\Psi)} & & D(h)\ar[d]^{\Psi} \\
        \mathbb{G}_{m} \times \mathbb{G}_{m} \ar[rr]^{\mu_{\mathbb{G}_{m}}} & & \mathbb{G}_{m}
        }.
    \end{equation}
    Let $ g \in A $ and $ e \in \mathbb{N}_{0} $ be such that $ \Psi $ maps $ x \in D(h) $ to $ (g/h^{e})(x) $.  If $ x \in D(h) $, and $ \xi \in \mathbb{G}_{m} $, then $ g/h^{e} $ is a semi-invariant of weight one because:
    \begin{align*}
        (g/h^{e})(\xi \bullet x) &= \Psi(\xi \bullet x) \\
        &= \Psi \circ \beta(\xi,x) \\
        &= \mu_{\mathbb{G}_{m}}\circ (\operatorname{id}_{\mathbb{G}_{m}}, \Psi)(\xi,x) \\
        &= \mu_{\mathbb{G}_{m}}\left(\xi, (g/h^{e})(x)\right) \\
        &= \xi (g/h^{e})(x).
    \end{align*}
    Therefore, b) implies c).

    If $ g/h^{e} \in A_{h}^{\ast} $ is a semi-invariant of weight one, then let $ \Psi $ be the morphism which sends a point $ x \in D(h) $ to $ (g/h^{e})(x) $.  Because the following identities hold:
    \begin{align*}
        \Psi \circ \beta(\xi,x) &= \Psi(\xi \bullet x) \\
        &= (g/h^{e})(\xi \bullet x) \\
        &= \xi (g/h^{e})(x) \\
        &= \mu_{\mathbb{G}_{m}}\left( \xi, (g/h^{e})(x)\right) \\
        &= \mu_{\mathbb{G}_{m}} \circ (\operatorname{id}_{\mathbb{G}_{m}}, \Psi)(\xi,x),
    \end{align*}
    the diagram in ~\eqref{E:52} commutes.  As a result, $ \Psi $ is $ \mathbb{G}_{m} $-equivariant.  If $ \Psi $ factored through a non-identity endomorphism $ q: \mathbb{G}_{m} \to \mathbb{G}_{m} $, then there is some $ \mathbb{G}_{m} $-equivariant morphism $ \Psi_{1} $ from $ D(h) $ to $ (\mathbb{G}_{m})^{1/q} $ such that $ q \circ \Psi_{1} = \Psi $.  By Proposition ~\ref{P:generalBundleRed}, $ \Psi_{1} $ would correspond to a semi-invariant of weight $ 1/q $.  This is not possible if $ q>1 $, so by Lemma ~\ref{L:connectedNonIdentityRed} (see page \pageref{L:connectedNonIdentityRed}) the morphism $ \Psi $ is generically smooth, dominant, and the fibres of $ \Psi $ are connected.  So, b) holds if and only if c) holds.
\end{proof}
\bibliographystyle{amsplain}
\bibliography{TheoryOfUnipActions}
\end{document}